\newtheorem{theorem}{\bf Theorem}[section]
\newtheorem{lemma}{\bf Lemma}[section]
\newtheorem{definition}{\bf Definition}[section]
\newcommand{\N}{\mathbb N}
\newcommand{\R}{{\mathbb R}}
\newcommand{\ep}{\varepsilon}
\numberwithin{equation}{section}
\newcommand{\diver}{\operatorname{div}}
\newcommand{\dert}{\partial_t}
\newcommand{\q}{{\bf q}}
\newcommand{\vc}[1]{{\bf #1}}
\newcommand{\vv}{\vc{v}}
\newcommand{\ww}{\mathbf{w}}
\newcommand{\thet}{\vartheta}
\renewcommand{\S}{\mathbf{S}}
\newcommand{\D}{\mathbf{D}}
\newcommand{\dt}{\,{\rm d} t }
\newcommand{\ds}{\,{\rm d} s }
\newcommand{\dx}{\,{\rm d} {x}}
\newcommand{\dxdt}{\dx  \dt}
\newcommand{\bphi}{\boldsymbol{\varphi}}
\newcommand{\intO}[1]{\int_{\Omega} #1 \ \dx}
\newcommand{\intTO}[1]{\int_0^T \int_{\Omega} #1 \ \dxdt}
\newcommand{\f}{{\bf f}}
\newcommand{\T}{\mathcal{T}}
\newcommand{\Tk}{\T_k}
\newcommand{\gk}{g_k}
\newcommand{\n}{\mathbf{n}}
\begin{document}

\title[non-Newtonian Navier-Stokes-Fourier system with dissipative heating]{On the existence of solutions to generalized Navier--Stokes--Fourier system with dissipative heating}
\author[A.~Abbatiello]{Anna Abbatiello}
\address{Anna Abbatiello.
University of Campania ``Luigi Vanvitelli'', Department of Mathematics and Physics, viale A.~Lincoln 5, 81100 Caserta, Italy.}   
\email{\tt anna.abbatiello@unicampania.it}

\author[M.~Bul\'{i}\v{c}ek]{Miroslav Bul\'i\v{c}ek}
\address{Miroslav Bul\'i\v{c}ek. Mathematical Institute, Faculty of Mathematics and Physics, Charles University, Sokolovsk\'{a} 83, 18675 Prague, Czech Republic.}
\email{\tt mbul8060@karlin.mff.cuni.cz}

\author[D.~Lear]{Daniel Lear}
\address{Daniel Lear.  Universidad de Cantabria, Departamento de Matem\'{a}ticas, Estad\'{i}stica y Computaci\'{o}n, Avd. Los Castros s/n, 39005 Santander, Spain. }
 \email{\tt daniel.lear@unican.es}


\keywords{Navier--Stokes--Fourier equations, dissipative heating, Oberbeck--Boussinesq system,  non-Newtonian fluids, large-data existence}
\subjclass[2020]{35Q30, 35K61, 76E30}
\thanks{The research activity of A.~Abbatiello is performed under the auspices of GNFM - INdAM. M. Bul\'{\i}\v{c}ek and D. Lear acknowledge the support of the project  No. 20-11027X financed by Czech Science Foundation (GA\v{C}R). M. Bul\'{\i}\v{c}ek is a member of the~Ne\v{c}as Center for Mathematical Modelling. D. Lear was partially supported by the RYC2021-030970-I research grant, and the AEI grants PID2020-114703GB-I00 and PID2022-141187NB-I00.\\
Corresponding author email: {\tt anna.abbatiello@unicampania.it}}

\begin{abstract}
We consider a flow of non-Newtonian incompressible heat conducting fluids with dissipative heating. Such system can be obtained by scaling the classical Navier--Stokes--Fourier problem. As one possible singular limit may be obtained the so-called Oberbeck--Boussinesq system. However, this model is not suitable for studying the systems with high temperature gradient. These systems are described in much better way by completing the Oberbeck--Boussinesq system by an additional dissipative heating. The satisfactory existence result for such system was however not available. In this paper we show the large-data and the long-time existence of dissipative and suitable weak solution. This is the starting point for further analysis of the stability properties of such problems.
\end{abstract}

\maketitle

\section{Introduction}\label{S1}

The Rayleigh--B\'{e}nard problem of thermal convection is one of the most canonical examples of instability in the fluid flow and it has attracted lot of attention not only in the modelling and physical understanding of such phenomenon  but also in the mathematical community. Indeed, this became a source of difficult mathematical problems and has driven the study of the so-called singular limits during last decades. Such singular limits may have many forms depending on the scaling, and the most classical model (the asymptotic limit) is the so-called Oberbeck--Boussinesq system, that is used exactly for Rayleigh--B\'{e}nard convection. This is the case when the fluid is heated below with the bottom temperature $\theta^{b}$ and with prescribed lower temperature $\theta^{t}$ on the top of two parallel plates and when the fluid can be understood as mechanically incompressible and all changes in the density are just because of variations in the temperature. It appears that such model is the very accurate approximation of real fluid in case that the temperature gradient  and consequently $(\theta^b - \theta^t$) is not large. On the other hand, for large temperature gradient, the Oberbeck--Boussinesq may significantly fail in giving the correct predictions and therefore there are many attempts how to generalize the approximative model. One of such attempt is to add a dissipative heating into the system. This approach was successfully used in \cite{RRS} and \cite{KRT}, where the authors formally derived a generalization of the Oberbeck--Boussinesq system in the following form
\begin{subequations}\label{sys:fullmodel}
\begin{align*}
\diver \vv &= 0,\\
\dert \vv + \diver(\vv \otimes \vv ) - \frac{1}{\sqrt{\text{Gr}}}\diver \S  +\nabla p&= - \theta\mathbf{f}, \\
\tilde{c}_v(\theta)\left(\dert\theta+\diver(\theta\vv)\right) + \frac{1}{\text{Pr}\sqrt{\text{Gr}}}\diver \vc{q} &= \frac{2\text{Di}}{\sqrt{\text{Gr}}}\S:\D{\vv} +\text{Di}(\theta+\Theta) (\vv\cdot\mathbf{f}).
\end{align*}
\end{subequations}
Here, $\text{Gr}$ is the  Grashof number, $\text{Pr}$ denotes the Prandtl number, and the new additional scaling parameter is the dissipative number  $\text{Di}$. Further, $\vv$ means the velocity field, the body force is just the gravity, i.e. it has direction in the $d$-th component and is constant $\mathbf{f}:=(-1)\mathbf{e}_d$ and the auxiliary functions $\Theta$ scales the difference between the temperature on the top and on the bottom of plates
\[
\Theta:=\frac{1}{2}\frac{\theta^{b}+\theta^{t}}{\theta^{b}-\theta^{t}}.
\]
Here $\theta$ is the real physical temperature. Note here that assuming $\text{Di}=0$, we arrive to the classical Oberbeck--Boussinesq system without dissipative heating. This system with $\text{Di}=0$ is rigorously derived and treated in \cite{BeFeOs23} as the singular limit of compressible Navier--Stokes--Fourier system with certain nonlocal boundary conditions. The existence analysis for such problem is then established in~\cite{AbFe23}. It should be mentioned here that all considered models in \cite{BeFeOs23,AbFe23} are Newtonian, i.e. the Cauchy stress $\S$ is linear with respect velocity gradient.

\subsection{Beyond Oberbeck--Boussinesq system}
Here, in this paper, we want to deal with $\text{Di}>0$ and with possibly nonlinear $\S$, and as a starting point for further analysis, we want to establish the existence of a solution. To simplify the notation and also computation (but without any essential impact on the analysis), we denote a new unknown $\thet:=\theta+\Theta$, consider general body force $\mathbf{f}$ and scale the equations such that $\text{Gr}=\text{Pr}=\text{Di}=1$ and we obtain the final system
%
%
\begin{subequations}\label{al}
\begin{align}
\label{i1}
\diver \vv &= 0,\\
\label{i2}
\dert \vv + \diver(\vv \otimes \vv ) - \diver \S + \nabla \pi &= \thet\mathbf{f}, \\
\label{i3}
\dert\thet+\diver(\thet\vv) + \diver \vc{q} &= \S:\D -\thet (\vv\cdot\mathbf{f}),
\end{align}
\end{subequations}
which is supposed to be satisfied in $Q:=(0,T) \times \Omega\subset(0, +\infty)\times \R^d$ with $\Omega$  being a Lipschitz domain. Note here, that \eqref{i1} is the incompressibility constraint, \eqref{i2} is the balance of linear momentum and \eqref{i3} is the balance of internal energy. Here $\vv: Q\to \R^d$ denotes the velocity field,  $\D:=  (\nabla\vv + (\nabla\vv)^{T})/2$ is the symmetric part of the velocity gradient $\nabla \vv$, $\pi:Q\to \R$ is the pressure, $\thet:Q\to \R$ is the temperature; $\S: Q\to \R^{d\times d}_{\rm sym}$ denotes the viscous part of the Cauchy stress tensor and $\vc{q}:Q \to \R^d$ is the heat flux.

\subsection{Cauchy stress tensor and heat flux}

The heat flux $\vc{q}$ is represented by the Fourier law
\begin{equation}\label{Fourier}
\vc{q} = \vc{q}^*(\thet,\nabla \thet):= - \kappa(\thet) \nabla \thet,
\end{equation}
with the heat conductivity $\kappa: \R \to (0, +\infty)$ being a continuous function of the temperature satisfying, for all $\thet \in (0, +\infty)$ and for some $0<\underline{\kappa}\le \overline{\kappa} <+\infty,$
\begin{align}\label{k}
0<\underline{\kappa}\leq \kappa(\thet) \leq \overline{\kappa}<+\infty.
\end{align}
We also assume that the Cauchy stress is given as $\S=\S^*(\thet, \D\vv)$, where $\S^*: (0,\infty)\times \R^{d\times d}_{\rm sym} \to \R^{d\times d}_{\rm sym}$ is a continuous mapping fulfilling for some $p>2d/(d+2)$, some $0< \underline{\nu}, \overline{\nu}<+\infty$ and for all $\thet\in \R_+$, $\D_1,\D_2 \in \R^{d\times d}_{\rm sym}$
\begin{subequations}\label{nu}
\begin{align}
(\S^*(\thet, \D_1)-\S^*(\thet, \D_2)): (\D_1-\D_2)&\ge 0,\label{nu1}\\
\S^*(\thet,\D_1):\D_1\ge \underline{\nu}|\D_1|^p - \overline{\nu}, \quad |\S^*(\thet, \D_1)|&\le \overline{\nu}(1+|\D_1|)^{p-1}, \quad \S^*(\thet, 0)=0.\label{nu2}
\end{align}
\end{subequations}
The prototypic relation $\S\sim \nu(\thet) |\D\vv|^{p-2}\D\vv$ falls into the class \eqref{nu}.

\subsection{Boundary and initial conditions}\
The system \eqref{i1}--\eqref{i3} is completed by the following initial conditions
\begin{equation}\label{condizioni-iniziali}
\vv(0) = \vv_0, \ \
\thet(0) = \thet_0> 0 \qquad \mbox{ in } \Omega.
\end{equation}
We prescribe the following Navier's slip ($\alpha\geq 0$) boundary condition for the velocity
\begin{equation}\label{BCvelocity}
\vv\cdot\mathbf{n}=0, \qquad {\alpha}\vv_{\tau}=-[\S\mathbf{n}]_{\tau} \qquad \text{on } \partial\Omega,
\end{equation}
and the Neumann type boundary condition for the temperature
\begin{equation}\label{BCtemperature}
\q \cdot \mathbf{n}=0, \qquad \text{on } \partial\Omega.
\end{equation}
Here, $\mathbf{n}$ is the unit outward normal and $\vv_{\tau}$ stands for the projection of the velocity field to the tangent plane, i.e. $\vv_{\tau} := \vv-(\vv\cdot \mathbf{n})\mathbf{n}$. The first condition in \eqref{BCvelocity} expresses the fact that the solid boundary is impermeable, the second condition in \eqref{BCvelocity} is Navier's slip boundary condition, and the condition in~\eqref{BCtemperature} states that there is no heat flux across the boundary. In fact, this is not the correct boundary condition and one should consider here either the Dirichlet boundary condition or inspired by \cite{BeFeOs23} some version of nonlocal boundary condition. However, since we want to present the first large-data result for the problems with dissipative heating, we assume the simplest boundary conditions. However, we are sure that the similar result can be obtained for more realistic boundary conditions and it will be a part of the forthcoming paper about the stability, where the Dirichlet boundary condition plays the crucial role.



\section{Definition of solution and  main theorem}
We assume that the initial data $\vv_0, \thet_0$ and the external body force $\mathbf{f}$ satisfy
\begin{subequations}\label{propertiesdata}\begin{align}
&\vv_0\in L^2_{\mathbf{n},\diver},\quad  \thet_0\in L^1(\Omega), \quad \log \thet_0\in L^1(\Omega), \quad \mathbf{f}\in L^{\infty}((0,T)\times \Omega),\label{DATA1}
\end{align}
and that
\begin{align}
&\thet_0\geq 0 \mbox{ for a.a. } x\in \Omega.\label{DATA2}
\end{align}\end{subequations}
We look for $(\vv,\thet,\pi):[0,T]\times \Omega \to \R^d\times \R^{+}\times \R$ solving the following set of equations in $(0,T)\times \Omega$
\begin{align}
\label{eq:v}
\dert \vv + \diver(\vv \otimes \vv ) - \diver \S + \nabla \pi = \thet\mathbf{f},\qquad \diver \vv = 0, \\
\label{eq:E}
\dert \left(\frac{|\vv|^2}{2}+\thet\right)+\diver\left(\vv\left(\frac{|\vv|^2}{2}+\thet+\pi\right) \right) + \diver \vc{q} = \diver (\S \vv),
\end{align}
completed by a weak formulation of the internal energy inequality
\begin{equation}\label{ineq:thet}
\dert\thet+\diver(\thet\vv) + \diver \vc{q} \geq  \S:\D -\thet (\vv\cdot\mathbf{f}),
\end{equation}
and satisfying the boundary conditions
\begin{subequations}\label{bcandt=0}
\begin{equation}
\vv\cdot\mathbf{n}=0, \quad \alpha \vv_{\tau}+[\S\mathbf{n}]_{\tau}=0, \quad \nabla\thet\cdot\mathbf{n}=0 \qquad \text{on } (0,T)\times \partial\Omega,
\end{equation}
and the initial conditions
\begin{equation}
\vv(0, \cdot)=\vv_0, \quad \thet(0, \cdot)=\thet_0.
\end{equation}
\end{subequations}
The inequality~\eqref{ineq:thet} can be also replaced by (this is usually used in the setting of compressible fluids, where the a~priori estimates are not sufficient to define \eqref{ineq:thet} in sense of distributions)
\begin{equation}\label{ineq:entr}
\dert\eta+\diver(\eta\vv) + \diver \left(\frac{\vc{q}}{\thet}\right) \geq  \frac{\S:\D}{\thet}-\frac{\vc{q}\cdot \nabla \thet}{\thet^2} - \vv\cdot\mathbf{f},
\end{equation}
where the entropy $\eta$ is defined as $\eta:=\log \thet$.

Below we give a precise formulation of the notion of weak solution but before that we
introduce some notation that will be needed in what follows.

\subsection{Basic definitions and function spaces}
Let $\Omega\subset \R^d$  be a bounded domain with Lipschitz boundary $\partial \Omega$, i.e. $\Omega \in\mathcal{C}^{0,1}$. We say $\Omega \in  \mathcal{C}^{1,1}$ if  the mappings that
locally describe the boundary $\partial\Omega$ belong to $\mathcal{C}^{1,1}$.

We consider the standard Lebesgue, Sobolev and Bochner spaces endowed with the classical norms.
For our purposes, we introduce for arbitrary $q\in[1,\infty)$ the subspaces of vector-valued Sobolev functions given by
\[
W_{\mathbf{n}}^{1,q}:=\overline{\{\vv\in \mathcal{C}^{\infty}(\Omega;\R^d)\cap \mathcal{C}(\overline{\Omega};\R^d) : \text{tr}(\vv)\cdot\mathbf{n} =0 \text{ on } \partial\Omega \}}^{\|\cdot \|_{1,q}}, \qquad W_{\mathbf{n}}^{-1,q'}:=\left(W_{\mathbf{n}}^{1,q}\right)^{\ast},
\]
and
\[
W_{\mathbf{n},\diver}^{1,q}:=\{\vv\in W_{\mathbf{n}}^{1,q} : \diver(\vv) =0 \}, \qquad W_{\mathbf{n},\diver}^{-1,q'}:=\left(W_{\mathbf{n},\diver}^{1,q}\right)^{\ast},
\]
\[
L_{\mathbf{n},\diver}^q:=\overline{\{\vv\in W_{\mathbf{n},\diver}^{1,q}\}}^{\|\cdot \|_{q}}.
\]
Similarly, we consider the classical Sobolev space $W^{1,q}(\Omega)$ and use the standard abbreviation for its dual space $W^{-1,q'}:=(W^{1,q}(\Omega))^*$. Also, in what follows whenever there is $v\in X^*$, $u\in X$, the symbol $\langle v,u\rangle$ means the duality paring in $X$. In case there was possible ambiguity, we would write $\langle v,u\rangle_X$. Notice that all above mentioned space are Banach spaces that are in addition separable provided that $p,q <\infty$. In addition, they are reflexive whenever $p,q\in (1,\infty)$.
%
%
%
%
Further, we introduce few inequalities used and needed in the text. Since we deal only with the symmetric gradient, we need some form of the~Korn inequality. Since we want to deal with general boundary conditions, we use the following form (see \cite[Lemma 1.11]{BuMR} or \cite[Theorem 11]{BP})
%
\begin{equation}\label{eq:Korna0}
\|\vv\|_{1,p}\leq  C(p) (\|\D(\vv)\|_p+\|\vv\|_2) \qquad \text{for all } \vv\in W_{\mathbf{n}}^{1,p},
\end{equation}
which is valid for all $p\in (1,\infty)$ provided that $\Omega$ is Lipschitz. Further, we also frequently use in the paper the following interpolation inequality
\begin{equation}\label{interp}
\|u\|_{\frac{p(d+2)}{d}}^{\frac{p(d+2)}{d}}\le C(\Omega,p) \|u\|_{2}^{\frac{2p}{d}} \|u\|_{1,p}^{p}.
\end{equation}


\subsection{Definition of weak and suitable weak solutions and main theorem}
Here, we introduce the notion of \textit{weak} and \textit{suitable weak} solution to \eqref{eq:v}--\eqref{bcandt=0}. We consider only the dimension $d=3$.
\begin{definition}[Weak solution]\label{Def1} Let $\Omega\subset\R^3$ be a bounded domain of class $C^{1,1}$  and let $(0, T)$ with $T>0$ be the time interval. Let $\vv_0,\thet_0$ and $\mathbf{f}$ be given functions satisfying \eqref{propertiesdata} and let $p$ be given in the interval $(6/5, +\infty)$.
We say that a triplet $(\vv,\thet,\pi)$ is a \textit{weak} solution to the problem \eqref{eq:v}--\eqref{bcandt=0} if
\begin{align}
\vv &\in \mathcal{C}_{\rm weak}(0,T;L^2(\Omega))\cap L^p(0,T;W_{\mathbf{n},\diver}^{1,p})\cap L^2(0,T; L^2(\partial\Omega)^3), \label{space-v}\\
\S&\in  L^{p'}(Q) \mbox{ and } \S=\S^*(\thet, \D\vv) \mbox{ for a.a. } (t,x),\\
\thet &\in   L^\infty(0, T; L^1(\Omega))\cap L^q(0, T; W^{1,q}(\Omega)) \mbox{ for any } q\in \left[1, \frac{5}{4}\right), \\
\thet &\in L^{q}(Q)\mbox{ for any } q\in \left[1, \frac{5}{3}\right), \  \thet(t, x)\geq 0 \ \text{for a.a. } (t, x),\\
\eta& \in L^\infty(0, T; L^1(\Omega)) \cap L^2(0, T; W^{1,2}(\Omega))  \mbox{ and } \eta=\log\thet \mbox{ for a.a. } (t,x), \\
\pi &\in L^{q'}(Q) \mbox{ with } q= \max \left\{p, \frac{5p}{5p-6}\right\} \text{ and }  \int_{\Omega}\pi(t,x)\dx=0 \quad \text{for a.a. } t,
\end{align}
fulfills the following weak formulations: The linear momentum equation~\eqref{eq:v} is satisfied in the following sense
\begin{equation}
\label{momentum}
\begin{split}
-\int_0^T \int_\Omega \vv\cdot \partial_t\boldsymbol\varphi \dx\dt +\int_0^T\int_\Omega (\S - \vv\otimes\vv):\D\boldsymbol\varphi \dx\dt +\alpha\int_0^T\int_{\partial\Omega}\vv\cdot\boldsymbol\varphi \,{\rm d}\sigma_x \dt\\
=\int_0^T\int_\Omega\pi\diver \boldsymbol\varphi +  \thet\, \mathbf{f}\cdot\boldsymbol\varphi  \dx\dt + \int_\Omega \vv_0\cdot \bphi(0)\dx
\end{split}
\end{equation}
for any $\bphi\in \mathcal{C}_0^\infty([0, T); W_{\mathbf{n}}^{1, q}\cap L^{\infty}(\Omega))\cap L^2((0,T)\times\partial\Omega)$ with $q= \max\{p, \frac{5p}{5p-6}\}$;\\
The global energy inequality holds in the following sense
\begin{equation}\label{weak:energyeq}
-\int_0^T \int_\Omega \left(\frac{|\vv|^2}{2}+\thet\right) \,\partial_t \varphi \dx\dt+ \alpha \int_0^T \int_{\partial\Omega} |\vv|^2 \varphi \,{\rm d} \sigma_x \dt \le \int_\Omega \left(\frac{|\vv_0|^2}{2}+\thet_0\right)\, \varphi(0)\dx 
\end{equation}
for any  nonnegative  $\varphi\in \mathcal{C}_0^\infty([0,T))$;\\
The entropy inequality~\eqref{ineq:entr} is satisfied as
\begin{equation}
\label{eta-entropy}
\begin{split}
&-\intTO{\eta\, \dert\varphi}  - \intTO{\eta\,\vv\cdot \nabla\varphi}  + \intTO{\kappa(\thet)\nabla\eta\cdot\nabla\varphi} \\
&\qquad \geq \intTO{\frac{\S:\D{\vv}}{\thet}\,\varphi} + \intTO{\kappa(\thet)\,|\nabla \eta|^2\, \varphi} + \intO{(\log\thet_0)\, \varphi(0)}
\end{split}
\end{equation}
for any nonnegative $\varphi \in \mathcal{C}_0^\infty([0, T); W^{1,\infty}({\Omega}))$.
The initial conditions are attained in the following sense
\begin{align}\label{INC}
&\lim_{t\to 0_+}\left(\|\vv(t)-\vv_0\|_2 + \|\thet(t) -\thet_0\|_1\right)=0.
\end{align}
\end{definition}
The above definition fulfills the basic assumption on the consistency, i.e. if we have a weak solution that is in addition smooth then it is also the classical solution, we refer here e.g. to the classical book~\cite{Feireisl}, where such approach is justified. On the other hand, if we want to study further properties of the solution, for example the stability, we usually require more refined notion of the solution, namely the suitable weak solution. However, it also requires more assumption on the growth parameter $p$.
\begin{definition}[Suitable weak solution]\label{Def2}
Let $\Omega\subset\R^3$ be a bounded domain of class $C^{1,1}$  and let $(0, T)$ with $T>0$ be the time interval. Let $\vv_0,\thet_0$ and $\mathbf{f}$ be given functions satisfying \eqref{propertiesdata} and let $p\in (9/5, +\infty)$ be given. We say that a triplet $(\vv,\thet,\pi)$ is a \textit{suitable weak} solution to the problem \eqref{eq:v}--\eqref{bcandt=0} if Definition~\ref{Def1} is satisfied with \eqref{weak:energyeq} replaced by
\begin{equation}\label{weak:energyeq2}
\begin{split}
\int_0^T \int_\Omega -\left(\frac{|\vv|^2}{2}+\thet\right) \,\partial_t \varphi -\left(\vv\left(\frac{|\vv|^2}{2}+\thet+\pi\right)+\q-\S \vv\right)\, \cdot\nabla\varphi \dx\dt \\
+ \alpha \int_0^T \int_{\partial\Omega} |\vv|^2 \varphi \,{\rm d} \sigma_x \dt  = \int_\Omega \left(\frac{|\vv_0|^2}{2}+\thet_0\right)\, \varphi(0)\dx,
\end{split}
\end{equation}
which is valid for any $\varphi\in \mathcal{C}_0^\infty([0,T);W^{1,\infty}({\Omega}))$.
Moreover, we require that \eqref{ineq:thet} is satisfied in the following sense
\begin{equation}\label{temperature-ineq}
\intTO{ -\thet \, \partial_t \varphi -(\thet\vv+\q)\cdot\nabla\varphi} \geq \intTO{ \S:\D\vv\,\varphi - \thet (\vv\cdot\mathbf{f}) \, \varphi} + \intO{\thet_0\,\varphi(0)},
\end{equation}
for any nonnegative  $\varphi\in \mathcal{C}_0^\infty([0,T);W^{1,\infty}({\Omega}))$.
\end{definition}

Next, we formulate the main theorem of this paper.
\begin{theorem}\label{thm:mainthem}
Let $\Omega\subset\R^3$ be a bounded domain with  $\mathcal{C}^{1,1}$ boundary. Assume that  $\S^*$ and $\kappa$ satisfy \eqref{k} and \eqref{nu} with $p>6/5$. Then for any data $\vv_0,\thet_0, \mathbf{f}$ fulfilling \eqref{propertiesdata}, there exists a
weak solution to \eqref{eq:v}--\eqref{bcandt=0} in the sense of Definition~\ref{Def1}. Moreover, if $p>8/5$ then \eqref{weak:energyeq} holds with the equality sign. In addition, if $p>9/5$ then there exists a suitable weak solution in sense of Definition~\ref{Def2}. Furthermore, if $p\ge 11/5$, then \eqref{eta-entropy} and \eqref{temperature-ineq} holds with equality sign and the following is true
\begin{equation}\label{prst}
\limsup_{m\to \infty}\int_{Q}\frac{m|\nabla \thet|^2}{\thet^2} \chi_{\{\thet >m\}}\dx \dt =0.
\end{equation}
\end{theorem}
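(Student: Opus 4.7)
The strategy is the classical existence scheme for heat-conducting non-Newtonian fluids, adapted to the dissipative-heating term, and organised as a hierarchy of conclusions as $p$ grows: basic weak solution for $p>6/5$, global energy equality for $p>8/5$, suitable-weak regularity for $p>9/5$, and equality in the entropy/internal-energy relations plus~\eqref{prst} for $p\ge 11/5$. I~would first set up a~three-level approximation: a~Galerkin basis on the divergence-free Sobolev space respecting Navier slip for $\vv$, a~mollification of the convective velocity and of the dissipative source $\S:\D\vv$, and a~truncation $T_k$ of the same source in the temperature equation so that the approximate parabolic problem has bounded right-hand side. Adding a~small constant $\delta>0$ to $\thet_0$ and invoking the parabolic minimum principle keeps $\thet_n$ strictly positive, legitimising all manipulations involving $\log\thet_n$.

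\textbf{A~priori bounds and compactness.} Testing the momentum equation with $\vv_n$ and summing with the internal-energy equation exploits the exact cancellation of $\S:\D\vv$ and $\thet(\vv\cdot\mathbf{f})$, yielding uniform bounds $\vv_n\in L^\infty(0,T;L^2)\cap L^p(0,T;W^{1,p}_{\vn,\diver})$ via Korn~\eqref{eq:Korna0} and $\thet_n\in L^\infty(0,T;L^1)$. Dividing the internal-energy equation by $\thet_n$ produces the entropy identity, which bounds $\log\thet_n\in L^\infty(0,T;L^1)\cap L^2(0,T;W^{1,2})$ together with $\frac{\S:\D\vv_n}{\thet_n},\ \frac{|\nabla\thet_n|^2}{\thet_n^2}\in L^1(Q)$; combining these with $\thet_n\in L^\infty_tL^1_x$ via a~Boccardo--Gallou\"et interpolation gives $\thet_n\in L^q(0,T;W^{1,q})$ for every $q<5/4$ and then $\thet_n\in L^q(Q)$ for $q<5/3$ via~\eqref{interp}. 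The pressure is reconstructed by a~De~Rham / Bogovskii splitting $\pi_n=\pi_n^1+\pi_n^2$ adapted to Navier slip, giving the exponent $q'$ with $q=\max\{p,5p/(5p-6)\}$. Aubin--Lions then yields strong convergence of $\vv_n$ in $L^2(Q)$, and the same argument applied to the entropy (whose right-hand side is a~nonnegative bounded measure, so lower semicontinuity suffices) gives a.e.~convergence of $\log\thet_n$ and hence of $\thet_n$.

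\textbf{Main obstacle and refinements for larger $p$.} The principal technical difficulty is the identification $\S=\S^*(\thet,\D\vv)$ in the limit: because the dissipative source is only equi-integrable in $L^1$, one cannot test the limiting momentum equation with $\vv$ itself. I~would use the $L^\infty$-truncation (Lipschitz-truncation) method of Diening--M\'alek--Steinhauer, truncating $\vv_n-\vv$ at a~scale $\lambda$, testing into the approximate momentum equation, and letting $n\to\infty$ followed by $\lambda\to\infty$; the hypothesis $p>6/5$ is precisely what this truncation needs to keep $\vv\otimes\vv$ in an admissible test class, while the strong convergence of $\thet_n$ transfers through $\S^*(\thet_n,\cdot)$ by continuity. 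Lower semicontinuity then delivers the entropy inequality~\eqref{eta-entropy} and the global energy inequality~\eqref{weak:energyeq}. The subsequent refinements follow from increased integrability of $\vv$ via~\eqref{interp}: for $p>8/5$, $\vv$ itself is admissible in~\eqref{momentum} and~\eqref{weak:energyeq} becomes an equality; for $p>9/5$, the flux $\vv(|\vv|^2/2+\thet+\pi)$ is locally integrable in $Q$ so the suitable-weak formulation~\eqref{weak:energyeq2} holds, and analogously $\thet\vv$ is admissible in the internal-energy inequality~\eqref{temperature-ineq}. For $p\ge 11/5$, $\S:\D\vv\in L^{1+\ep}(Q)$ is equi-integrable, which justifies testing the approximate temperature equation with $\varphi/(\thet_n+\delta)$ and passing $\delta\to 0$, producing equalities in~\eqref{eta-entropy} and~\eqref{temperature-ineq}; the same equi-integrability yields~\eqref{prst} by testing the entropy relation with a~cut-off of $\chi_{\{\thet>m\}}$ and applying dominated convergence as $m\to\infty$.
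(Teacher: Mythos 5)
Your plan reproduces the broad architecture of the paper (Galerkin--truncation approximations, entropy estimates, Boccardo--Gallou\"et type interpolation, Lipschitz-type truncation for the identification of $\S$, and a hierarchy of $p$-thresholds), but it misses the paper's essential and nonstandard point, which is the \emph{order} in which the uniform estimates are closed.

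You write that testing the momentum equation with $\vv_n$ and summing with the internal-energy equation yields the bounds $\vv_n\in L^\infty(0,T;L^2)\cap L^p(0,T;W^{1,p}_{\vn,\diver})$ via Korn. This is wrong, and it is precisely the pitfall created by the dissipative-heating term: when you add the internal-energy balance to the kinetic-energy balance, the dissipation $\S:\D\vv_n$ and the source $\thet_n(\vv_n\cdot\mathbf{f})$ both cancel \emph{identically}, leaving only
\[
\frac{1}{2}\frac{{\rm d}}{\dt}\|\vv_n\|_2^2+\frac{{\rm d}}{\dt}\|\thet_n\|_1+\alpha\|\vv_n\sqrt{g_k}\|_{2,\partial\Omega}^2=0,
\]
so one obtains $\sup_t(\|\vv_n\|_2^2+\|\thet_n\|_1)$ and nothing about $\nabla\vv_n$. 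If instead you test the momentum equation with $\vv_n$ \emph{alone}, the right-hand side contributes $\int\thet_n|\vv_n|$, which is not controlled by $\|\thet_n\|_{L^\infty L^1}$ and $\|\vv_n\|_{L^\infty L^2}$. The resolution in the paper is a genuine bootstrap: one \emph{first} derives the entropy and the renormalized ($f(\thet)=\thet^\sigma$) estimates, which bound $\thet_n^{\sigma/2}$ in $L^2(W^{1,2})\cap L^\infty(L^2)$ and then, via interpolation, give $\thet_n\in L^1(0,T;L^2(\Omega))$; \emph{only then} is the momentum equation tested with $\vv_n$ to close the $L^p(W^{1,p})$ bound, since now $\int\int\thet_n|\vv_n|\lesssim\int_0^T\|\thet_n\|_2\dt<\infty$. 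Your plan, as written, would stall at the very first step.

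Two further inaccuracies are worth flagging, though they are less serious. First, the threshold $p>8/5$ in the theorem is tied to strong convergence of the boundary trace $\vv^k\to\vv$ in $L^2(0,T;L^2(\partial\Omega))$ (needed to replace Fatou by a genuine limit in the boundary term of \eqref{weak:energyeq}), not to $\vv$ becoming admissible as a test function in the momentum equation; the latter requires $p\ge 11/5$ and is exactly what lets the standard Minty argument run on all of $Q$ (giving $\S^k:\D\vv^k\rightharpoonup\S:\D\vv$ weakly in $L^1(Q)$ rather than merely on subsets $Q_n$), which is the paper's reason for the equalities in \eqref{eta-entropy}, \eqref{temperature-ineq} and for \eqref{prst}. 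Second, the explanation you give for why $\vv$ cannot be used as a test function in the limit equation for smaller $p$ (``because the dissipative source is only equi-integrable in $L^1$'') is not the actual obstruction; the problem is the mismatch between the space $L^p(W^{1,p})$ where $\vv$ lives and the dual space $L^{q'}(W^{-1,q'})$, $q'=\min\{p',5p/6,5/3\}$, where $\partial_t\vv$ lives, together with the convective term's integrability; this is what the Diening--R\r{u}\v{z}i\v{c}ka--Wolf parabolic Lipschitz truncation (the paper's reference, rather than the $L^\infty$-truncation you cite) is designed to circumvent.
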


Note that  in above Theorem~\ref{thm:mainthem} we assume a stronger assumption on the boundary, namely $\Omega\in \mathcal{C}^{1,1}$. The reason is the necessity of having a~priori estimates of the pressure $\pi$ that appears in \eqref{weak:energyeq} and cannot be omitted by using divergence-free functions as test functions as it is usual in studying Navier--Stokes equations without the temperature. At this, we would like to discuss the main novelty of the paper. It seems that the only relevant existence result are due to \cite{NaWo10,NaPoWo12}, where the authors treated the same system but with $\S^*$ being linear with respect to the velocity gradient, i.e. the case $p=2$, so the result of this paper is much more general. Second, in \cite{NaPoWo12}, the authors treated only the steady case and in \cite{NaWo10}, the authors were not able to show the validity of \eqref{weak:energyeq2}, i.e. they did not show the existence of a suitable weak solution, which is the main weak point in their result. We also refer to \cite{KiCa20,Ki22}, which is an extension of \cite{NaPoWo12} to more general boundary conditions, but deal only with the steady case.  In our setting, we are able to prove the existence of a suitable weak solution. In addition, for $p\ge 11/5$, we obtain the internal energy equality. Furthermore, in spirit of results \cite{AbBuKa19,AbBuKa22, AbBuKa22b}, we see that the existence result obtained in this paper is the starting point for studying the stability analysis for the underlying problem. We would like to remind that there are naturally appearing numbers like $6/5$, $8/5$, $9/5$, $11/5$, which are dictated by the nature of the problem, and these borderline are usual in the theory for non-Newtonian models of heat conducting incompressible fluids. Adding the dissipative heating to the system does not bring any change in these borderlines. The only change, but rather essential, is the way how the uniform estimates are obtained, which makes the result highly nontrivial extension of works~\cite{AbBuKa19,AbBuKa22} (see also further references therein). In addition, \eqref{prst} as well as energy equalities valid for~$p\ge 11/5$ seem to be the essential assumption to obtain the stability result for arbitrary weak solution, see~\cite{AbBuKa22, AbBuKa22b} or \cite{AbBuKa19} where the same system but without dissipative heating and in dimension two is treated.

The proof is split into several steps. In Section \ref{s:defapporx}, we introduce an approximation, where the nonlinear convective terms are truncated by an auxiliary cut-off function. The existence of solutions for the $k$-approximation, is for the sake of completeness and clarity included in Appendix \ref{s:appendix}. Then, in Section \ref{ss:uniform}, we derive estimates that are uniform with respect to $k$-parameter. Finally, letting $k\to +\infty$ in Section \ref{ss:limit},  we complete the proof of Theorem~\ref{thm:mainthem}.

\section{Definition of approximating systems and their solutions}\label{s:defapporx}
We start this part with definition of auxiliary cut-off functions. For any arbitrary natural number $k\geq 1$, we define
\begin{equation}\label{def:Tk}
\Tk(z) = {\rm sign} (z) \min\{k, |z|\} \mbox{ for any } z\in \R,
\end{equation}
and a function $g_k:\mathbb{R}^{+}\rightarrow [0,1]$ such that it is continuous and satisfies
\begin{equation}
\gk(z)=\begin{cases}
1 &\mbox{ if } z< k,\\
0 &\mbox{ if } z> 2k.
\end{cases}
\end{equation}

Finally, we introduce an auxiliary function $\eta$, which is used in the proof of attainment of the initial data. Let $T > 0$ be given, and let $0 < \varepsilon \ll 1$ and $t \in (0, T-\varepsilon)$ be arbitrary. Consider $\eta \in \mathcal{C}^{0,1}([0, T])$ as a piece-wise linear function of three parameters, such that
\begin{equation}\label{def:eta}
\eta(\tau)=\begin{cases}
1  & \text{if }\tau\in[0,t),\\
1+\frac{t-\tau}{\varepsilon} & \text{if } \tau\in[t,t+\varepsilon),\\
0 & \text{if } \tau \in [t+\varepsilon,T].
\end{cases}
\end{equation}

We define an approximative problem $\mathcal{P}^{k}$ (for simplicity we write $(\vv,\pi,\thet)$ instead of $\vv^{k},\pi^{k},\thet^{k}$) such that we truncate the convective term (in order to be able to use the Minty method), we truncate the source term (in order to have proper estimates at the beginning) and we also modify the boundary conditions (to avoid problem with low integrability). More precisely, we consider the problem:
\begin{align}
\label{def:divapprox}
\diver \vv &= 0,\\
\label{def:vapprox}
\dert \vv + \diver(\vv \otimes \vv \, g_{k}(|\vv|^2)) - \diver \S + \nabla \pi &= \T_k(\thet)\mathbf{f}, \\
\label{def:thetapprox}
\dert\thet+\diver(\T_k(\thet) \, \vv) + \diver \vc{q} &= \S:\D -\T_k(\thet) (\vv\cdot\mathbf{f}),
\end{align}
in $(0,T)\times\Omega$ complemented with the boundary conditions
\begin{equation}
\vv\cdot\mathbf{n}=0, \qquad \alpha\vv_{\tau}g_{k}(|\vv_{\tau}|)+[\S\mathbf{n}]_{\tau}=0,\qquad \nabla\thet\cdot\mathbf{n}=0 \qquad\text{on } \partial\Omega,
\end{equation}
and the following initial conditions
\begin{equation}\label{initial-k}
\vv(0) = \vv_0, \qquad \thet(0) = \thet_0>0 \qquad \text{in } \Omega.
\end{equation}

For this problem, we have the following existence result, which is formulated in any dimension $d$. Note that the result is dimension-independent due to the presence of truncation functions.
\begin{lemma}\label{mainlemma}
Let $\Omega\subset\R^d$ be a bounded domain with  $\mathcal{C}^{1,1}$ boundary. Assume that  $\S^*$ and $\kappa$ satisfy~\eqref{k} and~\eqref{nu} with $p>2d/(d+2)$. Then for any $k\in \mathbb{N}$ and  any data $\vv_0,\thet_0, \mathbf{f}$ fulfilling \eqref{propertiesdata}, there exists a triplet $(\vv,\thet,\pi)=(\vv^k,\thet^k,\pi^k)$ satisfying
\begin{align}
\vv &\in  \mathcal{C}(0,T;L_{\mathbf{n},\diver}^2)\cap L^p(0,T;W_{\mathbf{n},\diver}^{1,p}),\\
\partial_t \vv &\in  L^{p'}(0,T;W_{\mathbf{n}}^{-1,p}),\\
\thet &\in L^\infty(0, T; L^1(\Omega))  \quad \text{ and } \quad \thet(t,x)\geq 0 \quad \text{for a.a. } (t, x)\in (0, T)\times \Omega,\\
\log \thet& \in L^\infty(0, T; L^1(\Omega)),\\
\pi &\in L^{p'}(0,T;L^{p'}(\Omega)) \quad \text{ and } \quad \int_{\Omega}\pi(t,x)\dx=0 \quad \text{for a.a. } t\in (0,T),\\
(1+\thet)^{\frac{1-\varepsilon}{2}}&\in L^2(0,T; W^{1,2}(\Omega))  \mbox{ for any } \varepsilon >0
\end{align}
and
\begin{equation}
\label{entp}\lim_{m\to \infty} \int_{(0,T)\times\Omega\cap \{\thet \ge m\}} \frac{m|\nabla \thet|^2}{\thet^2} \dx \dt =0;
\end{equation}
attaining the initial conditions \eqref{initial-k} in the following sense
\begin{equation}\label{limt0approx}
\lim_{t\to 0_+}\left(\|\vv-\vv_0\|_2+ \|\thet-\thet_0\|_1\right)=0;
\end{equation}
satisfying equation \eqref{def:vapprox} in the following sense:  for any $\boldsymbol\varphi \in W^{1,p}_{\n}$ and  for a.a. $t\in (0, T)$ there holds
\begin{equation}\label{vel}
\begin{split}
\langle \partial_t\vv, \boldsymbol\varphi \rangle - \intO{\gk(|\vv|^2)\, (\vv\otimes\vv) : \nabla \boldsymbol\varphi} + \intO{\S:\D\boldsymbol\varphi} + \alpha  \int_{\partial\Omega} \alpha\vv_{\tau}g_{k}(|\vv_{\tau}|)\cdot \boldsymbol\varphi \,{\rm d}\sigma_x&\\
= \intO{ \Tk(\thet) \,\mathbf{f}\cdot\boldsymbol\varphi + \pi \diver \boldsymbol\varphi},&
\end{split}
\end{equation}
and satisfying \eqref{def:thetapprox}  in the following sense: for any $f\in \mathcal{C}^2(\R)$ satisfying $f''\in \mathcal{C}_0(\R)$, for any $\varphi \in W^{1,2}(\Omega)\cap L^\infty(\Omega)$ and for a.a. $t\in (0, T)$ there holds
\begin{equation}\label{temp}
\begin{split}
\langle \partial_t f(\thet), \varphi\rangle - \intO{f(\Tk(\thet))\, \vv\cdot\nabla\varphi} + \intO{f'(\thet)\kappa(\thet)\nabla\thet \cdot \nabla \varphi } + \intO{f''(\thet)\kappa(\thet)|\nabla\thet|^2\,  \varphi }&\\
= \intO{f'(\thet)\,\S:\D\vv \,\varphi} - \intO{\Tk(\thet)f'(\thet)\,\vv\cdot \mathbf{f} \,\varphi}.&
\end{split}
\end{equation}
\end{lemma}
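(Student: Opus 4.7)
The plan is to construct the approximate solution via a Galerkin scheme for the velocity coupled with a Schauder-type fixed point for the temperature. Fix a basis $\{\vw_i\}_{i\geq 1}$ of $W^{1,r}_{\mathbf{n},\diver}$ (with $r$ large enough that finite linear combinations are in $L^\infty$) that is orthonormal in $L^2_{\mathbf{n},\diver}$. Given $\tilde \vv$ in the span of the first $n$ modes -- hence a priori bounded in every norm -- first solve the temperature equation \eqref{def:thetapprox} for $\thet \geq 0$ via a standard parabolic argument: nonnegativity follows from comparison together with $\S^*(\thet,\D \tilde\vv):\D\tilde\vv \geq 0$ from \eqref{nu2}, and because the right-hand side is only $L^1$ in space-time we have to work from the start with the renormalized formulation \eqref{temp}. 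Then update $\vv$ as the Galerkin solution of \eqref{def:vapprox} driven by the bounded source $\Tk(\thet)\mathbf{f}$; the ODE system is globally solvable because $g_k$ and $\Tk$ enforce linear growth. Schauder's theorem closes the coupling.

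The uniform-in-$n$ estimates proceed as follows. Testing \eqref{def:vapprox} by $\vv$ annihilates the truncated convective term, since $g_k(|\vv|^2)(\vv\cdot\nabla)\vv\cdot \vv = \vv\cdot\nabla H_k(|\vv|^2)$ with $H_k'(s)=g_k(s)/2$, which integrates to zero by $\diver \vv=0$ and $\vv\cdot\mathbf{n}=0$; combined with \eqref{nu} and the Korn inequality \eqref{eq:Korna0}, this yields the $L^\infty(0,T;L^2_{\mathbf{n},\diver})\cap L^p(0,T;W^{1,p}_{\mathbf{n},\diver})$ bound on $\vv$. Integrating \eqref{def:thetapprox} gives the $L^\infty(L^1)$ bound on $\thet$ from nonnegativity and boundedness of $\Tk(\thet)\vv\cdot\mathbf{f}$. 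Applying \eqref{temp} with $f'(\thet)=(1+\thet)^{-\varepsilon}$ produces the $L^2(W^{1,2})$ bound on $(1+\thet)^{(1-\varepsilon)/2}$, and $f(\thet)=\log(\delta+\thet)$ followed by $\delta\to 0_+$ delivers the entropy bound on $\log\thet$. The pressure is recovered by the de Rham / Bogovskii argument (using $\partial\Omega\in \mathcal{C}^{1,1}$) as $\pi\in L^{p'}(Q)$ with zero spatial mean, after which $\partial_t\vv\in L^{p'}(0,T;W^{-1,p}_{\mathbf{n}})$ follows by comparison in \eqref{vel}.

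To pass $n\to\infty$, Aubin--Lions gives strong $L^p$-convergence of $\vv_n$, and Minty's monotonicity trick, based on \eqref{nu1}, then identifies $\S=\S^*(\thet,\D\vv)$; however, this step also needs strong convergence of $\thet_n$ because $\S^*$ depends on $\thet$. Strong $\thet_n$-convergence comes from the Sobolev bound on $(1+\thet_n)^{(1-\varepsilon)/2}$ together with Aubin--Lions applied to a truncation (time regularity of $\thet$ coming from the equation in a sufficiently negative Sobolev space). The renormalized equation \eqref{temp} for arbitrary $f\in \mathcal{C}^2$ with $f''\in\mathcal{C}_0$ is then obtained by a DiPerna--Lions type smoothing, with the compact support of $f''$ ensuring that $f''(\thet)\kappa(\thet)|\nabla\thet|^2$ is integrable. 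Property \eqref{entp} follows from the $L^2$-bound on $\nabla(1+\thet)^{(1-\varepsilon)/2}$ together with absolute continuity, and \eqref{limt0approx} is derived from equicontinuity in time inherited from $\partial_t\vv$ and from the renormalized form. The principal obstacle is the simultaneous identification of $\S^*(\thet,\D\vv)$ and the handling of the $L^1$ heating term $\S:\D\vv$ on the right-hand side of the temperature equation; the renormalized formulation is precisely the device that both delivers strong temperature compactness and makes Minty's method applicable.
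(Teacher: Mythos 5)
Your overall plan is viable but takes a genuinely different route from the paper, and there is one step that does not work as stated.

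\textbf{Construction.} The paper uses a two-level Galerkin scheme: \emph{both} the velocity and the temperature are discretized (parameters $n$ and $m$), so that the approximate system is a Carath\'eodory ODE and existence is immediate; one then passes $m\to\infty$ with $n$ fixed, and finally $n\to\infty$. You instead keep only the velocity finite-dimensional and solve the temperature equation as a genuine parabolic PDE, closing the loop with Schauder. That can be made to work, but it is not free: for fixed $\tilde\vv$, the temperature problem is still \emph{quasilinear}, because $\S^*(\thet,\D\tilde\vv)$ and $\kappa(\thet)$ depend on the unknown $\thet$, so ``standard parabolic argument'' hides a nonlinear solvability step; and the continuity/compactness of the Schauder map have to be established. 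The paper's double Galerkin avoids all of this. Also, your remark that ``the right-hand side is only $L^1$'' at the fixed-point level is misleading: with $\tilde\vv$ in a finite-dimensional Galerkin space, $\D\tilde\vv$ is bounded, so by \eqref{nu2} both $\S^*(\thet,\D\tilde\vv)$ and $\S^*(\thet,\D\tilde\vv):\D\tilde\vv$ are bounded uniformly in $\thet$; the genuine $L^1$ difficulty only appears in the final limit $n\to\infty$. The renormalized formulation is therefore not \emph{forced} on you at the approximate level; in the paper it is derived a posteriori from the $L^2(W^{1,2})$ regularity of $\thet^n$, precisely so that it survives the passage to the limit.

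\textbf{A genuine gap: the proof of \eqref{entp}.} You claim \eqref{entp} ``follows from the $L^2$-bound on $\nabla(1+\thet)^{(1-\varepsilon)/2}$ together with absolute continuity.'' This does not work. The bound gives
\[
\int_Q \frac{|\nabla\thet|^2}{(1+\thet)^{1+\varepsilon}}\dx\dt\le C(\varepsilon),
\]
and on $\{\thet>m\}$ one has $\frac{m|\nabla\thet|^2}{\thet^2}\le C\, m^{\varepsilon}\,\frac{|\nabla\thet|^2}{(1+\thet)^{1+\varepsilon}}$; so the factor $m^{\varepsilon}$ you pick up goes to $+\infty$, while the tail integral $\int_{\{\thet>m\}}\frac{|\nabla\thet|^2}{(1+\thet)^{1+\varepsilon}}$ goes to $0$ only qualitatively. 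The product is indeterminate and absolute continuity gives no rate. The correct derivation, used in the paper, is to test the renormalized temperature equation with $f'(s)=(s-m)_+/s$ and $\phi\equiv 1$, which yields
\[
\int_{Q}\chi_{\{\thet>m\}}\frac{m\kappa(\thet)|\nabla \thet|^2}{\thet^2}\dx \dt
\le
\int_{Q} \frac{\S:\D\vv\,(\thet-m)_+}{\thet}
+ \frac{\T_k(\thet)\,(\vv\cdot\mathbf{f})\,(\thet-m)_+}{\thet}\dx\dt,
\]
and the right-hand side tends to $0$ because $\S:\D\vv$ and $\T_k(\thet)\vv\cdot\mathbf{f}$ are fixed $L^1$ functions, $(\thet-m)_+/\thet\le\chi_{\{\thet>m\}}$, and $|\{\thet>m\}|\to 0$. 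You need this test-function argument; the Sobolev bound alone is insufficient.

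\textbf{The rest.} Your uniform estimates (testing the velocity equation with $\vv$, the temperature equation with $\phi\equiv 1$, with $f'(\thet)=(1+\thet)^{-\varepsilon}$, and with $\log(\delta+\thet)$) match the paper's and are correct. The Minty argument for identifying $\S$, combined with Aubin--Lions compactness for $\vv$ and $\thet$, is also the paper's route. For the pressure you invoke de~Rham/Bogovskii; the paper instead constructs $\pi$ explicitly via the Neumann Laplacian using $L^{p'}$-regularity on $\mathcal{C}^{1,1}$ domains (Grisvard); both are acceptable, but the paper's choice is what permits the subsequent pressure estimates with test functions $\nabla\varphi$ satisfying the Neumann condition.

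Bottom line: replace the Schauder construction by the two-level Galerkin (or, if you keep Schauder, address the quasilinearity of the temperature subproblem and the compactness of the map), and fix the argument for \eqref{entp} by using the renormalization with $f'(s)=(s-m)_+/s$ rather than absolute continuity.
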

%
%
%
%
\begin{proof}
The complete proof is presented at the Appendix~\ref{s:appendix} for the most important case $d=3$. For other dimensions the proof is however almost identical.
\end{proof}
We would like to emphasize here, that the above existence result is in fact very strong. Although the equation~\eqref{def:vapprox} is satisfied in the classical weak sense~\eqref{vel}, the equation~\eqref{def:vapprox} is satisfied in the renormalized weak sense as~\eqref{temp}. This enables us to deduce the proper uniform estimates rigorously.

\section{Limit in the approximating system}\label{s:ktoinf}
In the previous section we established the existence of a weak solution to the $k$-approximating system \eqref{def:divapprox}--\eqref{def:thetapprox}. Key $k$-uniform estimates and the limits as $k\to +\infty$ are derived in this section. We focus only on dimension $d=3$, the proof for $d=2$ is in fact even easier.

\subsection{Uniform estimates}\label{ss:uniform}
For $(\vv,\thet,\pi)=(\vv^k,\thet^k,\pi^k)$ we derive estimates that are uniform with respect to $k$-parameter (the relevant quantities are then bounded by a generic constant $C$, where $C(\|\mathbf{f}\|_{\infty}, \|\vv_0\|_2, \|\thet_0\|_1, \|\log \thet_0\|_1)$).

We set $\boldsymbol\varphi=\vv$ in \eqref{vel} and in \eqref{temp} we set $\varphi=1$ and $f(\thet)=\thet$. Summing both identities and using the fact that $\diver \vv = 0$ to eliminate convective terms\footnote{Compare it with very similar computations in Appendix.} we deduce
\begin{equation*}
\frac{1}{2}\frac{{\rm d}}{\dt}\|\vv\|_2^2 + \frac{{\rm d}}{\dt} \|\thet\|_1 + \alpha \|\vv\sqrt{g_k(|\vv|)}\|_{2,\partial\Omega}^2 = 0.
\end{equation*}
Integration with respect the time variable then leads to
\begin{equation}\label{A1}
\sup_{t\in(0, T)} \left(\|\vv(t)\|_2^2 + \|\thet(t)\|_1\right) +\alpha \int_0^T \|\vv\sqrt{g_k(|\vv|)}\|_{2,\partial\Omega}^2\dt \leq C(\|\vv_0\|_2, \|\thet_0\|_1).
\end{equation}
Next, for any $\varepsilon>0$ we set $f(\thet):= \log (\thet + \varepsilon)$  and $\varphi:= 1$  in \eqref{temp}, to deduce
\begin{equation*}
\frac{{\rm d}}{\dt} \intO{\log(\thet + \varepsilon)}= \intO{\frac{\kappa(\thet)|\nabla\thet|^2}{(\thet + \ep)^2}} + \intO{\frac{\S:\D\vv}{\thet + \ep}} - \intO{\frac{\Tk(\thet)}{\thet + \ep } \vv\cdot \mathbf{f}}.
\end{equation*}
Integrating this identity over the time interval $(0, t)$, it follows
\begin{equation*}
\begin{split}
&\int_0^t\intO{\frac{\kappa(\thet)|\nabla\thet|^2}{(\thet + \ep)^2}}\dt + \int_0^t\intO{\frac{\S:\D\vv}{ \thet + \ep }}\dt + \int_{\{\thet(t)+\ep \leq 1\}} |\log(\thet(t) +\ep)|\dx \\
&= \int_0^t\intO{\frac{\Tk(\thet)}{\thet + \ep }\vv\cdot \mathbf{f}}\dt +  \int_{\{\thet(t)+\ep >1\}}\log(\thet(t) +\ep)\dx\ - \intO{\log(\thet_0 +\ep)},
\end{split}
\end{equation*}
and taking the supremum over $t\in (0, T)$, we have
$$
\begin{aligned}
\intTO{\frac{\kappa(\thet)|\nabla\thet|^2}{(\thet + \ep)^2}} + \intTO{\frac{\S:\D(\vv)}{ \thet + \ep }} + \sup_{t\in(0,T)}\|\log{(\thet(t) + \ep)}\|_1 \\
\leq  C\intTO{|\vv\cdot \mathbf{f}|}+C\intO{|\thet(t)|} +\ep C\, \text{meas}(\Omega)+ \|\log \thet_0\|_1.
\end{aligned}
$$
Then employing \eqref{A1}, the fact that $\log\thet_0\in L^1(\Omega)$ and taking the limit as $\ep\to 0$ we get the following $k$-independent estimate
\begin{equation}\label{entropy-L81}\begin{split}
\sup_{t\in(0,T)}\|\log{(\thet(t))}\|_1+ &\intTO{\frac{\kappa(\thet)|\nabla\thet|^2}{\thet^2}} + \intTO{\frac{\S:\D\vv}{ \thet }}\\
 &\qquad \leq C(\|\vv_0\|_2, \|\thet_0\|_1, \|\log\thet_0\|_1, \|\mathbf{f}\|_{\infty,Q}).\end{split}
\end{equation}
In order to improve the uniform bound for the temperature, we fix arbitrary $\sigma\in (0, 1)$ and set $f(\thet)=\thet^{\sigma}$ and $\varphi=1$ in \eqref{temp} to obtain
\begin{equation*}
\frac{1}{\sigma}\frac{{\rm d}}{\dt} \intO{\thet^{\sigma}} + (\sigma-1)\intO{\kappa(\thet)\frac{|\nabla\thet|^2}{\thet^2}\thet^{\sigma}} = \intO{\thet^{\sigma-1} \S:\D\vv} -  \intO{\frac{\Tk(\thet)}{\thet}\, \thet^{\sigma}\,\vv\cdot \mathbf{f}}.
\end{equation*}
Integrating it over time and using the fact that $\sigma\in (0,1)$ and the uniform estimate \eqref{A1}, we deduce
\begin{equation}\label{thetaalpha}
\begin{split}
\intTO{\thet^{\sigma-1} \S:\D\vv} + &\intTO{\kappa(\thet)\frac{|\nabla\thet|^2}{\thet^2}\thet^{\sigma}}\\
&\leq C(\sigma, \|\vv_0\|_2, \|\thet_0\|_1,\|\mathbf{f}\|_{\infty,Q})\left(1+\intTO{\thet^{\sigma} |\vv|}\right).
\end{split}
\end{equation}
To bound the term on the right hand side, we recall the interpolation inequality (here we consider~$d=3$)
$$
\|\thet^{\frac{\sigma}{2}}\|_4^2 \leq  C(\Omega)\|\thet^{\frac{\sigma}{2}}\|_2^{\frac12} \|\thet^{\frac{\sigma}{2}}\|_{1,2}^{\frac32}
$$
and using also the H\"{o}lder inequality, the a~priori bound~\eqref{A1} and the assumption~\eqref{k}, we get the estimate
\begin{equation}\label{theta4}
\begin{split}
&C(\sigma, \|\vv_0\|_2, \|\thet_0\|_1,\|\mathbf{f}\|_{\infty,Q})\left(1+\int_0^T\|\vv\|_2\|\thet^{2\sigma}\|_1^{\frac{1}{2}}\dt\right)\leq C\left(1+\int_0^T\|\thet^{\frac{\sigma}{2}}\|_4^{2}\dt\right)\\
&\quad \le C\left(1+\int_0^T\|\thet^{\frac{\sigma}{2}}\|_2^{\frac12} \|\thet^{\frac{\sigma}{2}}\|_{1,2}^{\frac32}\dt\right) \le  \left(C+ \int_0^T\int_{\Omega} C(\sigma,\underline{\kappa})\thet^{\sigma} + \frac{\underline{\kappa}}{2}|\nabla \thet^{\frac{\sigma}{2}}|^2 \dx \dt \right)\\
&\quad \le C(\sigma,\Omega, \underline{\kappa}, \|\vv_0\|_2, \|\thet_0\|_1,\|\mathbf{f}\|_{\infty,Q}) +\int_0^T\int_{\Omega} \frac{\kappa(\thet)|\nabla \thet|^2}{2\thet^{2-\sigma}}\dx \dt.
\end{split}
\end{equation}
%
Collecting \eqref{thetaalpha} and \eqref{theta4} we deduce the following estimates that are uniform with respect to~$k\in \N$
\begin{equation}\label{gradient-theta-alpha}
 \intTO{\kappa(\thet)\frac{|\nabla\thet|^2}{\thet^2}\thet^{\sigma}}\leq C(\sigma, \underline{\kappa},\Omega,\|\vv_0\|_2, \|\thet_0\|_1,\|\mathbf{f}\|_{\infty,Q}).
\end{equation}
Consequently, we deduced that for arbitrary $\sigma \in (0,1)$ there holds
\begin{equation}\label{theta-alpha}
\thet^{\frac{\sigma}{2}} \mbox{ is uniformly bounded with respect to } k\in \N \mbox{ in } L^2(0, T; W^{1,2}(\Omega))\cap L^\infty(0, T; L^2(\Omega)).
\end{equation}

Next, we derive the final estimates for $\thet$. Using the interpolation inequality
\begin{equation}\label{int3d}
\|z\|_{\frac{10}{3}}^{\frac{10}{3}}\leq C(\Omega)\|z\|_2^{\frac43}\|z\|_{1,2}^{2}
\end{equation}
on the function $z:=\thet^{\sigma}$, with $\sigma:=\frac{3q}{5}$ for $q<5/3$, we obtain from \eqref{theta-alpha} that
\begin{equation}\label{thet-uniform}
\intTO{\thet^q} = \intTO{\left|\thet^{\frac{\sigma}{2}}\right|^{\frac{10}{3}}} \leq C(q, \underline{\kappa},\Omega,\|\vv_0\|_2, \|\thet_0\|_1,\|\mathbf{f}\|_{\infty,Q}) \mbox{ for any } q\in [1, 5/3).
\end{equation}
Similarly, combining \eqref{gradient-theta-alpha} and \eqref{thet-uniform} and using the H\"{o}lder inequality, we get
\begin{equation}\label{nn-thet-uniform}
\begin{split}
\intTO{|\nabla\thet|^r}&=\intTO{\left(\frac{|\nabla\thet|^2}{\thet^{2-\sigma}}\right)^{\frac{r}{2}}\thet^{\frac{r(2-\sigma)}{2}}}\\
&\le \left(\intTO{\frac{|\nabla\thet|^2}{\thet^{2-\sigma}}}\right)^{\frac{r}{2}} \left(\intTO{\thet^{\frac{r(2-\sigma)}{(2-r)}}}\right)^{\frac{2-r}{2}} \\
& \leq C(r, \underline{\kappa},\Omega,\|\vv_0\|_2, \|\thet_0\|_1,\|\mathbf{f}\|_{\infty,Q}) \mbox{ for any } r\in [1, 5/4).
\end{split}
\end{equation}
The last bound can be viewed by setting
$$
q:=\frac{5-r}{3(2-r)} <\frac53 \qquad \sigma:=1-\frac{5-4r}{3r} <1 \textrm{ provided that }r<\frac54.
$$
Finally, we use that  $\thet^{\frac{\sigma}{2}}$ is uniformly bounded in $L^2(0, T; L^6(\Omega))$ and consequently $\thet^\sigma$ is uniformly bounded in $L^1(0, T; L^3(\Omega))$, combine it with the interpolation inequality (valid for some $\sigma \in (0,1)$ and $\lambda\in (0,1)$)
$$
\|\thet\|_2\leq C\|\thet\|_1^\lambda \|\thet\|_{3\sigma}^{1-\lambda}\mbox{ with } \lambda=\lambda(\sigma)\in (0,1)
$$
and use the uniform estimates \eqref{A1} and \eqref{theta-alpha}, we see that
\begin{equation}\label{endtheta}
\int_0^T\|\thet\|_2\dt\leq C \int_0^T \|\thet\|_{3\sigma}^{1-\lambda}\dt = C \int_0^T \|\thet^{\frac{\sigma}{2}}\|_{6}^\frac{2(1-\lambda)}{\sigma}\dt \le  C(\underline{\kappa},\Omega,\|\vv_0\|_2, \|\thet_0\|_1,\|\mathbf{f}\|_{\infty,Q}).
\end{equation}

Having the estimate~\eqref{endtheta}, we can now proceed with further bounds on the velocity field. Setting $\boldsymbol\varphi:=\vv$ in \eqref{vel} and integrating in time, it yields that
\begin{equation}\label{kinetic-energy}
\begin{split}
\sup_{t\in (0, T)}\|\vv(t)\|^2_2+\intTO{\S:\D\vv}&\leq C\left(\|\vv_0\|^2_2 + \intTO{\thet |\vv|}\right)\leq  C\left(1+\int_0^T \|\thet\|_2\dt\right) \\
&\le C(\underline{\kappa},\Omega,\|\vv_0\|_2, \|\thet_0\|_1,\|\mathbf{f}\|_{\infty,Q}).
\end{split}
\end{equation}
As consequence it follows from \eqref{nu2} that
\begin{equation}\label{SD-uniform}
\intTO{|\S|^{p'}} + \intTO{|\D\vv|^p} \leq C(\underline{\kappa}, \underline{\nu}, \overline{\nu},\Omega,\|\vv_0\|_2, \|\thet_0\|_1,\|\mathbf{f}\|_{\infty,Q}).
\end{equation}
The interpolation inequality
$$
\|\vv\|_{\frac{5p}{3}} \leq C \|\vv\|_2^{\frac{2}{5}}\|\nabla\vv\|_{p}^{\frac{3}{5}} +\|\vv\|_2
$$
together with the Korn inequality~\eqref{eq:Korna0} and the uniform estimates \eqref{kinetic-energy}, \eqref{SD-uniform} ensure that
\begin{equation}\label{vvk}
\int_0^T\|\vv\|_{\frac{5p}{3}}^{\frac{5p}{3}}\dt \leq C(\underline{\kappa}, \underline{\nu}, \overline{\nu},\Omega,\|\vv_0\|_2, \|\thet_0\|_1,\|\mathbf{f}\|_{\infty,Q}).
\end{equation}

We finish this part by introducing the estimates on the pressure. It follows from \eqref{vel} that for all $\varphi \in W^{2,p'}(\Omega)$ satisfying $\nabla \varphi \cdot \mathbf{n} =0$ on $\partial \Omega$, and for almost all time $t\in (0,T)$ there holds (see also \eqref{eq:shortpi})
$$
\begin{aligned}
\intO{\pi \, \Delta\varphi}&= \intO{\S: \nabla^2 \varphi} - \intO{g_k(|\vv|^2) 	\, \vv\otimes\vv: \nabla^2 \varphi} \\
&\quad + \alpha \int_{\partial\Omega} g_k(|\vv|)\vv\cdot \nabla\varphi {\rm d}\sigma_x - \intO{\T_k(\thet)\mathbf{f}\cdot \nabla \varphi }.
\end{aligned}
$$
In addition, recall that
$$
\int_{\Omega} \pi \dx =0 \qquad \textrm{ for a.a. } t\in (0,T).
$$
Then, we use the fact that $\Omega\in \mathcal{C}^{1,1}$ and the theory for Laplace equation (see also \cite{BuMR,BuFeMa09,BuMaRa09,BuGwMaSw12} for details) and obtain that
\begin{equation*}
\|\pi\|_{z'}\leq C(\|\S\|_{p'} + \||\vv|^2\|_{\frac{5p}{6}} + \|\vv g_k(|\vv|)\|_{2, \partial\Omega} + \|\f\|_\infty\|\thet\|_{\frac{5}{4}}) \mbox{ with } z':=\min \left\{p', \frac{5p}{6}, \frac{5}{3} \right\}.
\end{equation*}
Applying the $z'$-power and integrating the result over $(0,T)$ we finally get
\begin{equation}\label{pressure}
\begin{split}
\int_0^T\|\pi\|_{z'}^{z'}\dt &\leq C\left(\int_0^T 1+\|\S\|_{p'}^{p'} + \|\vv\|_{\frac{5p}{3}}^{\frac{5p}{3}} + \|\vv \sqrt{g_k(|\vv|)}\|^2_{2, \partial\Omega} + \|\f\|^{\frac53}_\infty\|\thet\|^{\frac53}_{\frac{5}{4}}\right)\\
&\le C, \qquad \mbox{ for } z':=\min \left\{p', \frac{5p}{6}, \frac{5}{3} \right\},
\end{split}
\end{equation}
where the last  bound follows from the estimates \eqref{A1}, \eqref{gradient-theta-alpha}, \eqref{SD-uniform} and \eqref{vvk}.

Finally, we recall the estimates on the time derivative. By using the very classical procedure, we can deduce from \eqref{vel} with the help of above uniform estimates  \eqref{A1}, \eqref{gradient-theta-alpha}, \eqref{SD-uniform} and \eqref{vvk} that
\begin{equation}\label{timev}
\int_0^T \|\partial_t \vv^k\|^{z'}_{W_{\mathbf{n}}^{-1,z'}}\dt \le C,
\end{equation}
where $z'$ is defined in \eqref{pressure}. Similarly, considering \eqref{temp} with $f(s):=\log s$, we can use the above estimates \eqref{A1}, \eqref{gradient-theta-alpha}, \eqref{SD-uniform} and \eqref{vvk} to observe that for any $\omega>5$, we have (see e.g. \cite{BuMaRa09} for similar estimate)
\begin{equation}\label{timeth}
\int_0^T \|\partial_t \eta^k\|_{W^{-1,\omega'}}\dt \le C.
\end{equation}

\subsection{Limit as \texorpdfstring{$k\to+\infty$}{k}}\label{ss:limit}
Let us consider $\bphi\in C_0^\infty([0, T); W_{\mathbf{n}}^{1, q}\cap L^{\infty}(\Omega))\cap L^2((0,T)\times\partial\Omega)$ with $q= \max\{p, \frac{5p}{5p-6}\}$ in \eqref{vel}, integrate it over the time interval $(0, T)$, then after the integration by parts in the time derivative term we get
\begin{equation}\label{momentum-k}
\begin{split}
-&\intTO{\vv^k\cdot\partial_t \boldsymbol\varphi} - \intTO{\gk(|\vv^k|^2)\, (\vv^k\otimes\vv^k) : \nabla \boldsymbol\varphi} + \intTO{\S^k:\D\boldsymbol\varphi} \\
&\qquad + \alpha  \int_0^T\int_{\partial\Omega} g_k(|\vv^k|)\vv^k \cdot \boldsymbol\varphi \,{\rm d}\sigma_x\dt
= \intTO{ \Tk(\thet^k) \,\mathbf{f}\cdot\boldsymbol\varphi + \pi^k \diver \boldsymbol\varphi} + \intO{\vv_0^k\cdot\bphi(0)},
\end{split}
\end{equation}
where we abbreviate $\S^k=\S^*(\thet^k, \D\vv^k).$ Next, we consider  $\varphi\in \mathcal{C}_0^\infty([0,T);\mathcal{C}^{\infty}({\Omega}))$ in \eqref{temp},  integrate it over the time interval $(0, T)$, and after the integration by parts with respect to the time variable,  we get
\begin{equation}\begin{split}
-&\intTO{f(\thet^k) \partial_t \varphi} - \intTO{f(\Tk(\thet^k))\, \vv^k\cdot\nabla\varphi}\\
&\qquad  + \intTO{f'(\thet^k)\kappa(\thet^k)\nabla\thet^k \cdot \nabla \varphi }+ \intTO{f''(\thet^k)\kappa(\thet^k)|\nabla\thet^k|^2\,  \varphi }\\
&= \intTO{f'(\thet^k)\,\S^k:\D\vv^k \,\varphi} - \intTO{\Tk(\thet^k)f'(\thet^k)\,\vv^k\cdot \mathbf{f} \,\varphi}+ \intO{f(\thet^k_0) \varphi(0)}.
\end{split}
\end{equation}
Now, we make two special choices of $f$, namely we consider $f(s)=s$ and then $f(s)=\log s$ (such choices can be rigorously justified taking the mollification with compactly supported functions). With the first choice, we deduce
\begin{equation}\label{temperature-k}
\begin{split}
&-\intTO{\thet^k \partial_t \varphi} - \intTO{\Tk(\thet^k)\, \vv^k\cdot\nabla\varphi} + \intTO{\kappa(\thet^k)\nabla\thet^k \cdot \nabla \varphi }\\
&\qquad = \intTO{\S^k:\D\vv^k \,\varphi} - \intTO{\Tk(\thet^k)\,\vv^k\cdot \mathbf{f} \,\varphi}
+ \intO{\thet^k_0\, \varphi(0)}.
\end{split}
\end{equation}
With the second choice, we also use the abbreviation $\eta^k=\log \thet^k$ and we see that
\begin{equation}\label{entropy-k}\begin{split}
&-\intTO{\eta^k \, \partial_t \varphi} - \intTO{\log(\Tk(\thet^k))\, \vv^k\cdot\nabla\varphi} \\
&\qquad + \intTO{\kappa(\thet^k)\nabla\eta^k \cdot \nabla \varphi } - \intTO{\kappa(\thet^k)|\nabla\eta^k|^2\,  \varphi }\\
&\quad = \intTO{\frac{1}{\thet^k}\,\S^k:\D\vv^k \,\varphi} - \intTO{\frac{\Tk(\thet^k)}{\thet^k}\,\vv^k\cdot \mathbf{f} \,\varphi}
+ \intO{\eta^k_0 \varphi(0)}.
\end{split}
\end{equation}
Finally, to get the energy identity, we consider $\varphi \in \mathcal{C}^\infty((0,T)\times \Omega)$ fulfilling $\varphi(T)=0$ and use $\vv^k \varphi$ and $\varphi$ as test functions in \eqref{momentum-k} and \eqref{temperature-k} respectively. Doing this and then  taking the sum of the outcome we deduce that (using also the fact that $\diver \vv^k=0$ and integration by parts\footnote{To evaluate the convective term we proceed as follows
$$
\begin{aligned}
&\intO{\gk(|\vv^k|^2)\, (\vv^k\otimes\vv^k) : \nabla (\vv^k \varphi)}=\intO{\varphi \gk(|\vv^k|^2)\, (\vv^k\otimes\vv^k) : \nabla \vv^k +\gk(|\vv^k|^2)\, (\vv^k\otimes\vv^k) : (\vv^k \otimes \nabla \varphi)}\\
&\quad =\frac12\intO{\varphi \gk(|\vv^k|^2)\, \vv^k \cdot \nabla |\vv^k|^2 +2\gk(|\vv^k|^2)\, |\vv^k|^2 \vv^k \cdot  \nabla \varphi} =\frac12\intO{\varphi  \vv^k \cdot \nabla \mathcal{G}_k(|\vv^k|^2) +2\gk(|\vv^k|^2)\, |\vv^k|^2 \vv^k \cdot  \nabla \varphi}\\
&\quad =\intO{\left(\gk(|\vv^k|^2)\, |\vv^k|^2 -\frac{\mathcal{G}_k(|\vv^k|^2)}{2}\right) \vv^k \cdot  \nabla \varphi}
\end{aligned}
$$}
)
\begin{equation}\label{totalenergy-grad}
\begin{split}
&\int_0^T \int_\Omega -\left(\frac{|\vv^k|^2}{2}+\thet\right) \,\partial_t \varphi \dx \dt + \alpha \int_0^T \int_{\partial\Omega} g_k(|\vv^k|)|\vv^k|^2 \varphi \,{\rm d} \sigma_x \dt\\
&\quad +\int_0^T \int_{\Omega} \left(-\vv^k\left(\frac{2\gk(|\vv^k|^2)\, |\vv^k|^2 -\mathcal{G}_k(|\vv^k|^2)}{2}+\mathcal{T}_k(\thet^k)+\pi^k\right)+\kappa(\thet^k)\nabla\thet^k+\S^k \vv^k\right)\, \cdot\nabla\varphi \dx\dt \\
   &= \int_\Omega \left(\frac{|\vv_0^k|^2}{2}+\thet_0\right)\, \varphi(0)\dx,
\end{split}
\end{equation}
where $\mathcal{G}_k$ is such that $\mathcal{G}_k'(s)=g_k(s)$.  In particular, for any $\varphi\in \mathcal{C}_0^\infty([0,T))$, i.e. $\varphi$ independent of spatial variable,   there holds
\begin{equation}\label{total-energy}\begin{array}{l}\displaystyle\vspace{6pt}
-\int_0^T \int_\Omega \left(\frac{|\vv^k|^2}{2}+\thet^k\right) \,\partial_t \varphi \dx\dt+ \alpha \int_0^T \int_{\partial\Omega} g_k(|\vv^k|)|\vv^k|^2 \varphi \,{\rm d} \sigma_x \dt = \int_\Omega \left(\frac{|\vv_0^k|^2}{2}+\thet_0^k\right)\, \varphi(0)\dx.
\end{array}
\end{equation}

We want to discuss the limit in formulations \eqref{momentum-k} and \eqref{temperature-k}--\eqref{total-energy}.
By virtue of the uniform estimates \eqref{kinetic-energy}, \eqref{SD-uniform} we can extract a subsequence $(\vv^k,\thet^k, \pi^k, \S^k, \eta^k)$ such that the following convergence results hold
\begin{align}
\vv^k &\rightharpoonup^* \vv && \mbox{ weakly-* in } L^\infty(0, T; L^2(\Omega; \R^3)), \label{Linfty2}\\
 \vv^k &\rightharpoonup \vv &&\mbox{ weakly in } L^p(0, T; W^{1,p}_{\n,\diver}),\label{Lp}\\
 \partial_t \vv^k &\rightharpoonup \partial_t \vv &&\mbox{ weakly in } L^{q'}(0, T; W^{-1,q'}_{\n})\mbox{ with } q'=\min \left\{p', \frac{5p}{6}, \frac{5}{3} \right\},\label{Lptd}\\
 g_k(|\vv^k|)\vv^k&\rightharpoonup \vv &&\mbox{ weakly in } L^2(0,T; L^2(\partial\Omega; \R^3)),\\
 \S^k &\rightharpoonup \S &&\mbox{ weakly in } L^{p'}(Q; \R^{3\times 3}),\label{Lp-S}\\
 \pi^k &\rightharpoonup \pi &&\mbox{ weakly in }L^{q'}(Q) \mbox{ with } q'=\min\{p', \frac{5p}{6}, \frac{5}{3} \},\\
  (\thet^k)^{\sigma}&\rightharpoonup   \overline{(\thet)^{\sigma}} &&\mbox{ weakly in } L^2(0, T; W^{1,2}(\Omega)) \mbox{ for any } \sigma\in (0, {1}/{2}),\label{thet-weak}\\
 \thet^k  &\rightharpoonup \thet  && \mbox{ weakly in } L^q(0, T; W_0^{1, s}(\Omega))  \mbox{ for any } q\in [1, 5/4).\label{grad-thet-weak}\\
 \eta^k  &\rightharpoonup  \eta &&\mbox{ weakly in }L^{2}(0,T; W^{1,2}(\Omega)).\label{gradient-eta}
 \end{align}
Moreover, employing the Aubin--Lions compactness Lemma we deduce that
\begin{align}
 \vv^k&\to\vv &&\mbox{ strongly in }L^q(Q; \R^3) \mbox{ for any $q\in [1, {5p}/{3})$ and a.e. in $Q$}, \label{v-strong}\\
 \vv^k&\to \vv &&\mbox{ strongly in } L^p((0, T; L^1(\partial\Omega)) \mbox{ and a.e. in } (0, T)\times\partial\Omega,\label{v-stb}\\
 \thet^k &\to \thet &&\mbox{ strongly in $L^q(Q)$ for any $q\in [1, 5/3)$ and a.e. in $Q$}.\label{thet-strong}
\end{align}
Consequently, we have that $\eta=\log \thet$ and $\overline{(\thet)^{\sigma}}=(\thet)^{\sigma}$. The above strong and weak convergence results are sufficient to pass to the limit in most term.

However, it is not sufficient for identification of $\S=\S^*(\thet, \D\vv)$. This identification follows from the procedure developed in \cite{DiRuWo}, see also \cite{BuGwMaSw12}. Indeed, there is shown that there exists a nondecreasing sequence of measurable sets $\{Q_n\}_{n=1}^{\infty}$ fulfilling $\lim_{n\to \infty}|Q\setminus Q_n|=0$ such that for every $n\in \mathbb{N}$ we have
\begin{equation}\label{Minty1}
\lim_{k\to \infty} \int_{Q_n} \S^k : (\D\vv^k - \D \vv) \dx \dt = 0.
\end{equation}
Further, using the growth assumption \eqref{nu2}, the convergence result \eqref{thet-strong}, the fact that $\D\vv \in L^{p'}(Q; \R^{3\times 3})$ and the Lebesgue dominated convergence theorem, we obtain
\begin{align}
\S^*(\thet^k, \D\vv) &\to \S^*(\thet, \D\vv) &&\mbox{ strongly in }L^{p'}(Q; \R^{3\times 3}). \label{SS-strong}
\end{align}
Thus, using the monotonicity assumption \eqref{nu1}, the weak convergence result~\eqref{Lp} and~\eqref{SS-strong}, we observe that for any $n\in\mathbb{N}$
\begin{equation}\label{strong1}
\begin{split}
&\lim_{k\to \infty}\int_{Q_n}|(\S^k - \S^*(\thet^k, \D\vv)):(\D\vv^k - \D \vv)| \dx \dt\\
&\quad =\lim_{k\to \infty}\int_{Q_n}(\S^k - \S^*(\thet^k, \D\vv)):(\D\vv^k - \D \vv) \dx \dt \overset{\eqref{Minty1},\eqref{SS-strong}}=0.
\end{split}
\end{equation}
Hence, we have that
\begin{align*}
(\S^k - \S^*(\thet^k, \D\vv)):(\D\vv^k - \D \vv)&\to 0 \mbox{ strongly in }L^{1}(Q_n).
\end{align*}
Thus, it is a simple consequence of \eqref{SS-strong} and \eqref{Lp} that for every $n\in \mathbb{N}$
\begin{align}
\S^k:\D\vv^k &\rightharpoonup  \S^*(\thet, \D\vv)): \D \vv  \mbox{ weakly in }L^{1}(Q_n).\label{L1weak}
\end{align}
Finally, using \eqref{Lp}, \eqref{Lp-S} and \eqref{L1weak}, we have for arbitrary $\mathbf{B}\in L^p(Q; \R^{3\times 3})$ that
$$
0\le (\S^k - \S^*(\thet^k, \mathbf{B})): (\D\vv^k - \mathbf{B})\rightharpoonup (\S - \S^*(\thet, \mathbf{B})): (\D\vv - \mathbf{B})\textrm{ weakly in }L^1(Q_n).
$$
Hence, using the Minty method, i.e. setting $\mathbf{B}:=\D \vv \pm \varepsilon \mathbf{C}$ in the above inequality, dividing by $\varepsilon>0$ and then letting $\varepsilon \to 0_+$, we have
$$
0\le \pm  \left(\S - \S^*(\thet, \D \vv)\right): \mathbf{C} \textrm{ a.e. in  } Q_n.
$$
Since $\mathbf{C}$ is arbitrary and $|Q\setminus Q_n|\to 0$ as $n\to \infty$, we observe from the above inequality that $\S=\S^*(\thet, \D\vv)$.

Having identified the nonlinearity $\S$, we may now focus on the limiting procedure in desired equations. Indeed, it is now easy to use \eqref{Linfty2}--\eqref{thet-strong} and to let $k\to \infty$ in \eqref{momentum-k} to deduce \eqref{momentum}. Here, it is essential that $\vv^k$ converges strongly in $L^{2+\varepsilon}(Q)$, which is due to the assumption $p>6/5$. Next, we let $k\to \infty$ in \eqref{total-energy}. Using \eqref{v-strong} and \eqref{thet-strong}, we can pass to the limit in the first term on the left hand side. For the second term on the left hand side we use \eqref{v-stb} and the~Fatou lemma to obtain the inequality \eqref{weak:energyeq}. In order to obtain the equality sign in~\eqref{weak:energyeq}, we can use the result in~\cite{BuMR}, where it is shown that\footnote{It follows from the following interpolation (assuming $p\le 2$, for $p>2$ it is obvious)
$$
\int_0^T \int_{\partial \Omega} |\vv|^2 \le C\int_0^T \|\vv\|^{\frac{8p-12}{5p-6}}_2 \|\vv\|^{\frac{2p}{5p-6}}_{\frac{5p-6}{2p},2}\dt\le C\int_0^T \|\vv\|^{\frac{8p-12}{5p-6}}_2 \|\vv\|^{\frac{2p}{5p-6}}_{1,p}\dt.
$$
Since $\frac{2p}{5p-6}<p$ for $p>8/5$, we see that the desired compactness follows from the a~priori estimates.
}
\begin{equation}\label{strongbo}
\vv^k \to \vv \textrm{ strongly in } L^2(0,T; L^2(\partial \Omega)^3),
\end{equation}
provided that $p>8/5$. Consequently, we obtain \eqref{weak:energyeq} with the equality sign.

Next, we focus on the energy and the entropy (in)equalities \eqref{eta-entropy} and \eqref{temperature-ineq}. To do so, we first show how to pass to the limit with possibly inequality signs in highest order terms. Let us consider arbitrary nonnegative $\varphi \in L^{\infty}(Q)$. Then using \eqref{nu1}, \eqref{nu2} and the convergence result~\eqref{L1weak}, we deduce that for any $n\in \mathbb{N}$
\begin{equation*}
\begin{aligned}
\liminf_{k\to +\infty}\intTO{\S^k:\D\vv^k\, \varphi}&\ge \liminf_{k\to +\infty}\int_{Q_n}\S^k:\D\vv^k\, \varphi\dx \dt = \int_{Q_n}\S:\D\vv\, \varphi\dx \dt.
\end{aligned}
\end{equation*}
Consequently, letting $n\to \infty$ in the above inequality, we deduce
\begin{equation}
\begin{aligned}
\liminf_{k\to +\infty}\intTO{\S^k:\D\vv^k\, \varphi}&\ge  \intTO{\S:\D\vv\, \varphi}.
\end{aligned}
\label{semi-c1}
\end{equation}
Similarly, using in addition \eqref{thet-strong} and the nonnegativity of $\thet^k$, we deduce
\begin{equation}
\begin{aligned}
\liminf_{k\to +\infty}\intTO{\frac{\S^k:\D\vv^k\, \varphi}{\thet^k}}&\ge  \intTO{\frac{\S:\D\vv\, \varphi}{\thet}}.
\end{aligned}\label{semi-c2}
\end{equation}
Very similarly, we can use the weak lower semicontinuity,  the weak convergence \eqref{gradient-eta}, the pointwise convergence of $\thet^k$  \eqref{thet-strong} and the boundedness of $\kappa(\cdot)$, see~\eqref{k}, there holds
\begin{equation}\label{gradeta}
\intTO{\kappa(\thet)|\nabla \eta|^2 \, \varphi}\leq \liminf_{k\to +\infty} \intTO{\kappa(\thet^k)|\nabla \eta^k|^2 \, \varphi}.
\end{equation}
These convergence results are sufficient to take the limit in \eqref{entropy-k} and to obtain \eqref{eta-entropy}. Then we can proceed to the limit in the equation \eqref{temperature-k} to deduce \eqref{temperature-ineq}, provided we show
\begin{equation}\label{SStv}
\thet^k \vv^k \to \thet \vv \textrm{ stronlgy in }L^1(Q;\R^3).
\end{equation}
To show the above convergence result, we recall the strong convergence results \eqref{v-strong} and \eqref{thet-strong}. Thus, it is sufficient to show that $\thet^k \vv^k $ is uniformly bounded in $L^{1+\varepsilon}(Q)$ for some $\varepsilon >0$. Using the H\"{o}lder inequality and the classical interpolation, we have (here $\sigma\in (0,1)$ will be specified later, but typically it will be almost equal to one)
$$
\begin{aligned}
\int_0^T \int_{\Omega} |\thet^k|^{1+\varepsilon}|\vv^k|^{1+\varepsilon}\dx\dt &\le \int_{0}^T \|\thet^k\|^{1+\varepsilon}_{2(1+\varepsilon)}\|\vv^k\|^{1+\varepsilon}_{2(1+\varepsilon)}\dt  =\int_{0}^T \|(\thet^k)^{\sigma}\|^{\frac{1+\varepsilon}{\sigma}}_{\frac{2(1+\varepsilon)}{\sigma}}\|\vv^k\|^{1+\varepsilon}_{2(1+\varepsilon)}\dt\\
&\le C\int_{0}^T \|(\thet^k)^{\sigma}\|^{\frac{1+\varepsilon}{\sigma}-\frac{3(2+2\varepsilon -\sigma)}{4\sigma}}_{1}
\|(\thet^k)^{\sigma}\|^{\frac{3(2+2\varepsilon -\sigma)}{4\sigma}}_{3}\|\vv^k\|^{1+\varepsilon-\frac{3\varepsilon p}{5p-6}}_{2}
\|\vv^k\|^{\frac{3\varepsilon p}{5p-6}}_{1,p}\dt\\
&\overset{\eqref{A1}}\le C\int_{0}^T \|(\thet^k)^{\sigma}\|^{\frac{3(2+2\varepsilon -\sigma)}{4\sigma}}_{3}
\left(\|\vv^k\|^p_{1,p}\right)^{\frac{3\varepsilon }{5p-6}}\dt \le C,
\end{aligned}
$$
provided that (we are using the uniform bounds coming from \eqref{Lp} and \eqref{thet-weak})
\begin{equation}\label{finish}
\frac{3\varepsilon }{5p-6}+\frac{3(2+2\varepsilon -\sigma)}{4\sigma} \le 1.
\end{equation}
Note that if
$$
\varepsilon < \frac{5p-6}{12},
$$
then we can always find $\sigma \in (0,1)$ such that the  inequality \eqref{finish} is satisfied. As a consequence, we deduce \eqref{SStv}. Thus, we may proceed to the limit also in \eqref{temperature-k} to obtain \eqref{temperature-ineq}.

Next, we want to let $k\to \infty$ in \eqref{totalenergy-grad} to get \eqref{weak:energyeq2}. We already discuss almost all terms except the one $|\vv^k|^2\vv$. Thus, for identification of the limit also in this term, we need that $\vv^k$ is compact at least in $L^3(Q)$. Comparing it with the strong convergence result \eqref{v-strong}, we see that the condition $p>9/5$ is exactly the one guaranteeing the compactness of the velocity in $L^3(Q)$.

Finally, in case that $p\ge 11/5$, one may follow the standard monotone operator theory and to conclude that \eqref{L1weak} holds true even in $L^1(Q)$ (and not only in a subset $Q_n\subset Q$). Thus, we have the weak convergence on the whole set $Q$ and therefore we are able to identify the limits in terms containing $\S^k:\D\vv^k$ with equality signs and consequently, we may consider equalities in \eqref{eta-entropy} and \eqref{temperature-ineq}. The relation \eqref{prst} is proved in the same way as \eqref{entp}, see the computation following \eqref{eq:thetA}.

We also omit the proof of attaining of the initial conditions~\eqref{INC} and refer the reader e.g. to \cite{BuMaRa09}. The proofs of both main theorems are thus complete.

%
%

\appendix

\section{Solvability of the \texorpdfstring{$k$}{k}-approximation - Lemma~\ref{mainlemma}} \label{s:appendix}
This appendix is devoted to the proof of Lemma~\ref{mainlemma}, i.e. we assume that $k>0$ is given and fixed. We provide the complete rigorous proof that is however very much inspired by \cite{BuMaRa09}, which can be used also as a tool for interested reader. We refer also to \cite{AbBuKa22,AbBuKa22b}, where the existence, the stability and the convergence to the equilibria are studied in details.

We proceed as follows. First, we introduce the two-level Galerkin approximation: one for the velocity field and the second one for the temperature. Then, we pass to the limit in the temperature equation and derive all necessary a~priori estimates. Finally, we pass to the limit also in the equation for the velocity and complete the proof. Note that because of the presence of the cut-off functions, the proof is relatively standard and we use just the monotone operator theory together with the relatively classical approach for parabolic equations with $L^1$ data.

For the sake of simplicity, we set $d=3$ from the very beginning (since it is physically most relevant case) and to shorten the proof we consider only the case $\alpha \equiv 0$ here. However, the general dimension $d$ and also the case when $\alpha >0$ is treated similarly. We also recall that in what follows the constant~$C$ denotes some universal constant depending only on the data of the problem, i.e. on $\vv_0$, $\theta_0$ and $\f$. If there is any dependence on different quantities, it will be clearly denoted.

\subsection{Galerkin approximations}
We start by considering an orthogonal basis $\{\ww_i\}_{i=1}^{\infty}$ of the space $W_{\mathbf{n},\diver}^{3,2} \hookrightarrow W^{1,\infty}(\Omega)^d$   that is orthonormal in $L_{\mathbf{n},\diver}^2$, see \cite[Appendix A.4]{MNRR} how to construct such basis.
Similarly, let $\{w_j\}_{j=1}^{\infty}$ be a basis of $W^{1,2}(\Omega)$ which is again orthonormal in the space $L^2(\Omega)$.

Moreover, defining the subspaces
\[
\mathbf{W}^n:=\text{span}\{\ww_1,\ldots,\ww_n\}\subset W_{\mathbf{n},\diver}^{3,2}, \qquad W^m:=\text{span}\{w_1,\ldots,w_m\}\subset W^{1,2}(\Omega),
\]
and the associated projections
\[
\mathbf{P}^n(\vv)\equiv \sum_{i=1}^{n} \left(\int_{\Omega} \vv \cdot \ww_i\dx\right)  \ww_i     :W_{\mathbf{n},\diver}^{3,2} \rightarrow \mathbf{W}^n, \qquad P^m(\thet)\equiv \sum_{j=1}^{m} \left(\int_{\Omega} \thet w_j \dx \right) w_j     :W^{1,2}(\Omega) \rightarrow W^m,
\]
we are in position to consider the usual Galerkin approximations.

Now, we construct Galerkin approximations $\{\vv^{n,m}, \thet^{n,m}\}_{n,m=1}^{\infty}$ of the form
\[
\vv^{n,m}(t,x):=\sum_{i=1}^{n} c_i^{n,m}(t) \ww_i(x), \qquad \thet^{n,m}(t,x):=\sum_{j=1}^{m} d_j^{n,m}(t) w_j(x),
\]
where $\mathbf{c}^{n,m}(t):=(c_1^{n,m}(t), \ldots, c_n^{n,m}(t))$ and $\mathbf{d}^{n,m}(t):=(d_1^{n,m}(t), \ldots, d_m^{n,m}(t))$ solve the following system of ordinary differential equations
\begin{equation}\label{eq:vnm}
\int_{\Omega}\partial_t \vv^{n,m} \cdot \ww_i+(\S^{n,m}-\vv^{n,m} \otimes \vv^{n,m} \, g_{k}(|\vv^{n,m}|^2)):\nabla \ww_i\dx =\int_{\Omega}\T_k(\thet_{\ast}^{n,m})\mathbf{f} \cdot  \ww_i \dx,
\end{equation}
for all $i\in\{1,\ldots,n\}$ and
\begin{equation}\label{eq:tnm}
\int_{\Omega}\partial_t \thet^{n,m}\,w_j-(\T_k(\thet^{n,m})\vv^{n,m}+\vc{q}^{n,m})\cdot \nabla w_j\dx=\int_{\Omega}\left(\S^{n,m}:\D(\vv^{n,m})-\T_k(\thet_{\ast}^{n,m}) \vv^{n,m}\cdot\mathbf{f}\right)w_j\dx,
\end{equation}
for all $j\in\{1,\ldots,m\}$. Here, we have used the following abbreviations
\begin{equation}\label{abbr}
\S^{n,m}:=\S^{\star}\left(\thet^{n,m},\D(\vv^{n,m})\right), \qquad \vc{q}^{n,m}:=\vc{q}^*(\thet^{n,m},\nabla \thet^{n,m}), \qquad \thet_{\ast}^{n,m}:=\max\{0,\thet^{n,m}\}.
\end{equation}
In addition, we assume that $\vv^{n,m}$ and $\thet^{n,m}$ satisfy the following initial conditions
\[
\vv_0^{n,m}:=\mathbf{P}^{n}(\vv_0)=\sum_{i=1}^{n}c_{0,i}\ww_i, \qquad \thet_0^{n,m}:=P^m(\thet_0^n)=\sum_{j=1}^{m}d_{0,j}^{n} w_j,
\]
where $\vv_0^{n,m}$ is independent of $m$--parameter and  $\thet_0^{n,m}$ has the following meaning. First, we use the convention that
\begin{equation*}
\eta_0(x):=\left\{\begin{aligned}
&0,  &&x\in \mathbb{R}^d\setminus\Omega,\\
&\ln (\thet_0(x)), &&x\in\Omega.
\end{aligned}\right.
\end{equation*}
Then, we compute the standard regularization of an integrable function $\eta_0$ with the kernel $r_{1/n}$ having the support in a ball of radii~$1/n.$ It means, we define $\eta_0^{n}:=r_{1/n}\ast \eta_0$. Then, since $\thet_0$ and $\ln \thet_0$ are assumed to belong to $L^1(\Omega)$, we have
\begin{subequations}\label{subb}
\begin{align}\label{subb1}
\thet_0^{n}:=e^{\eta_0^n} \xrightarrow[]{n\to \infty} \thet_0 \quad &\text{strongly in } L^1(\Omega),\\
\eta_0^{n} \xrightarrow[]{n\to \infty} \ln \thet_0  \quad &\text{strongly in } L^1(\Omega).\label{subb2}
\end{align}
\end{subequations}
Finally, we apply the projection onto the linear hull of $\{w_j\}_{j=1}^{m}$ to get $P^m(\thet_0^{n}).$ Note that as an immediate consequence of the properties of the projectors we have
\begin{subequations}\label{sub}
\begin{align}\label{sub1}
\vv_0^{n,m}\xrightarrow[]{n\to \infty} \vv_0 \qquad &\text{strongly in } L^2(\Omega)^d,\\
\thet_0^{n,m} \xrightarrow[]{m\to \infty} \thet_0^n \qquad &\text{strongly in } L^2(\Omega).\label{sub2}
\end{align}
\end{subequations}
In addition, since $\log\thet_0\in L^{1}(\Omega)$ by hypothesis, we also have that
\begin{equation}\label{eq:logthet0nm}
\log\thet_0^{n,m}\xrightarrow[]{m \to \infty} \log\thet_0^n \xrightarrow[]{n \to \infty}\log\thet_0 \qquad \text{strongly in } L^1(\Omega).
\end{equation}

We focus on solvability of \eqref{eq:vnm}--\eqref{eq:tnm}. Defining the auxiliary vector-valued functions
\begin{align*}
\mathbf{C}(t)&:=(\mathbf{c}^{n,m}(t),\mathbf{d}^{n,m}(t))=(c_1^{n,m}(t), \ldots, c_n^{n,m}(t), d_1^{n,m}(t), \ldots, d_m^{n,m}(t)),\\
\mathbf{C}_0&:=(\mathbf{c}_{0},\mathbf{d}_0^{n})=(c_{0,1}, \ldots, c_{0,n}, d_{0,1}^{n}, \ldots, d_{0,m}^{n}).
\end{align*}
Then, the system \eqref{eq:vnm}--\eqref{eq:tnm} can be rewritten as
\begin{align*}
\dot{\mathbf{C}}(t)&=\mathcal{F}(t,\mathbf{C}(t)),\\
\mathbf{C}(0)&=\mathbf{C}_0,
\end{align*}
where $\mathcal{F}$ is a Carath\'{e}odory function. Thus, using the classical Carath\'{e}odory theory, see \cite[Chapter 1]{W}, we deduce the existence of solution to \eqref{eq:vnm}--\eqref{eq:tnm} at least for a short time interval. The uniform estimates derived in the next subsection enable us to extend the solution onto the whole time interval~$(0, T)$. 
Then, we set $m\to\infty$  and then $n\to\infty$.
Note that some of the estimates are independent of the order of approximation and are frequently used later (after using weak lower semicontinuity of norm in a reflexive space).

\subsection{Estimates independent of  \texorpdfstring{$m$}{m}}

In this part, we start by assuming that $n\in\mathbb{N}$ is arbitrary, but fixed, and we let $m\to \infty$.

\subsubsection{Estimates independent of \texorpdfstring{$m$}{m} for the velocity:}
Multiplying the $i$-th equation in \eqref{eq:vnm} by $c_i^{n,m},$ then taking the sum over $i=1,\ldots, n$ we get
\begin{equation*}
\int_{\Omega} \partial_t \vv^{n,m}\cdot\vv^{n,m}+(\S^{n,m}-\vv^{n,m} \otimes \vv^{n,m} \, g_{k}(|\vv^{n,m}|^2)) :\nabla \vv^{n,m}\dx =\int_{\Omega}\T_k(\thet_{\ast}^{n,m})\mathbf{f} \cdot \vv^{n,m}\dx.
\end{equation*}
Integrating the result over time interval $(0,t)$, we obtain
\begin{equation}\label{eq:auxvnm}
\|\vv^{n,m}(t)\|_{2}^2+ 2 \int_0^t \int_{\Omega}\S^{n,m}:\D(\vv^{n,m})\dx \ds= \|\vv_0^{n,m}\|_{2}^2+2\int_0^t\int_{\Omega} \T_k(\thet_{\ast}^{n,m})\mathbf{f}\cdot  \vv^{n,m}\dx\ds,
\end{equation}
where we have used the identity
\begin{equation}\label{Zero}
\int_{\Omega}(\vv^{n,m} \otimes \vv^{n,m} \, g_{k}(|\vv^{n,m}|^2)):\nabla \vv^{n,m}\dx=0,
\end{equation}
which follows\footnote{The computation goes as follows:
$$
\begin{aligned}
&\int_{\Omega}(\vv^{n,m} \otimes \vv^{n,m} \, g_{k}(|\vv^{n,m}|^2)):\nabla \vv^{n,m}\dx=\frac12 \int_{\Omega}g_{k}(|\vv^{n,m}|^2)\vv^{n,m} \cdot \nabla |\vv^{n,m}|^2\dx\\
&=\frac12 \int_{\Omega}\vv^{n,m} \cdot \nabla G_{k}(|\vv^{n,m}|^2)\dx=-\frac12 \int_{\Omega} G_{k}(|\vv^{n,m}|^2) \diver \vv^{n,m} \dx=0,
\end{aligned}
$$
and $G_k$ denotes the primitive function to $g_k$.} from the fact $\diver \vv^{n,m}=0$ and integration by parts.
Next, we apply \eqref{nu2}$_1$ to the second term on the left-hand side of~\eqref{eq:auxvnm}. For the right hand side, we use the assumption that~$\f$ is bounded and apply the $L^2-L^2$ H\"{o}lder's inequality  and the definition of $\T_k$, see \eqref{def:Tk} and  the fact that $\|\vv^{n,m}_0\|_2 \le \|\vv_0\|_2$ to obtain for all $t\in (0,T)$
\begin{equation}\label{eq:finalvnm}
\|\vv^{n,m}(t)\|_{2}^2+  \int_0^t \|\D(\vv^{n,m})\|_{p}^p \ds  \leq C(k)\left(1 + \int_0^t \|\vv^{n,m}\|_2^2 \ds\right).
\end{equation}
First, the use of the Gronwall inequality directly leads to the $L^{\infty}-L^2$ estimate for $\vv^{n,m}$. Using this information and the  Korn inequality~\eqref{eq:Korna0} we deduce from \eqref{eq:finalvnm} the following $m$-independent estimate
\begin{equation}\label{eq:prefinalnm}
\sup_{t\in(0,T)}\|\vv^{n,m}(t)\|_{2}^2+  \int_0^T \|\vv^{n,m}\|_{1,p}^p\dt    \leq C(k).
\end{equation}

\subsubsection{Estimates independent of \texorpdfstring{$m$}{m} for the temperature:}
Next, multiplying the $j$-th equation in \eqref{eq:tnm} by $d_j^{n,m}$, taking the sum over $j=1,\ldots, k$ we get
\[
\int_{\Omega}\partial_t \thet^{n,m}\thet^{n,m}-(\T_k(\thet^{n,m})\vv^{n,m}+\vc{q}^{n,m})\cdot\nabla \thet^{n,m}=\int_{\Omega} \left(\S^{n,m}:\D(\vv^{n,m})-\T_k(\thet_{\ast}^{n,m}) (\vv^{n,m}\cdot\mathbf{f})\right)\thet^{n,m}\dx,
\]
and integrating the result over time we arrive at
\begin{equation}
\begin{split}\label{eq:auxthetnm}
&\|\thet^{n,m}(t)\|_2^2+2\int_0^t \int_{\Omega}\kappa(\thet^{n,m})|\nabla \thet^{n,m}|^2 \dx \ds\\
&\qquad = \|\thet_0^{n,m}\|_2^2 +2\int_0^t \int_{\Omega}\left(\S^{n,m}:\D(\vv^{n,m})-\T_k(\thet_{\ast}^{n,m}) (\vv^{n,m}\cdot\mathbf{f})\right)\thet^{n,m}\dx \ds,
\end{split}
\end{equation}
where we have used the abbreviation~\eqref{abbr} and the fact that
\begin{equation}\label{zero2}
\int_{\Omega}\T_k(\thet^{n,m})\vv^{n,m}\cdot \nabla \thet^{n,m}\dx=0,
\end{equation}
which follows\footnote{It is a consequence of the following computation
$$
\begin{aligned}
&\int_{\Omega}\T_k(\thet^{n,m})\vv^{n,m}\cdot \nabla \thet^{n,m}\dx=\int_{\Omega}\vv^{n,m}\cdot \nabla \mathfrak{T}_k(\thet^{n,m})\dx=-\int_{\Omega}\mathfrak{T}_k(\thet^{n,m}) \diver \vv^{n,m}\dx=0,
\end{aligned}
$$
where $\mathfrak{T}_k$ denotes a primitive function to $\T_k$.} from the fact that $\diver(\vv^{n,m})=0$ and integration by parts.
Proceeding as before, we are going to study each term in  \eqref{eq:auxthetnm}. For the left hand side, we use  the lower bound~\eqref{k} on the second term. For the right hand side, we use the fact that $\{\ww_i\}_{i=1}^{\infty}\in W_{\mathbf{n},\diver}^{3,2}\hookrightarrow W^{1,\infty}(\Omega)^d$. Consequently, since the velocity field is for almost all time $t\in (0,T)$ taken from the finite dimensional space $\mathbf{W}^n$, can deduce from \eqref{nu2} that
$$
\left|\S^{n,m}(t,x):\D(\vv^{n,m}(t,x))\right| + |\vv^{n,m}(t,x)| \le C(n) (1+ \|\vv^{n,m}(t)\|^p_2)\le C(k,n),
$$
where the second inequality follows from \eqref{eq:prefinalnm}. Combining both (recall that $\|\thet_0^{n,m}\|_2 \xrightarrow[]{m\to \infty} \|\thet_0^{n}\|_2$) we obtain
\begin{equation}\label{eq:finalthetnm}
\|\thet^{n,m}(t)\|_2^2+\int_0^t \|\nabla\thet^{n,m}\|_2^2 \ds \leq C(n,k)\left(1+  \int_0^t \|\thet^{n,m}\|_2^2 \ds\right)
\end{equation}
and applying the Gronwall lemma and recalling \eqref{eq:prefinalnm},  we have
\begin{equation}\label{eq:finalnm}
\sup_{t\in(0,T)}\left\lbrace\|\vv^{n,m}(t)\|_{2}^2+\|\thet^{n,m}(t)\|_2^2\right\rbrace+  \int_0^T \|\vv^{n,m}\|_{1,p}^p +\|\thet^{n,m}\|_{1,2}^2 \ds   \leq C(n,k).
\end{equation}
Finally, using the three-dimensional version of the interpolation inequality \eqref{interp}, it follows from~\eqref{eq:finalnm} and from the embedding $W^{3,2}\hookrightarrow W^{1,\infty}$ that
\begin{equation}\label{eq:finalnmb}
\int_0^T \|\vv^{n,m}\|_{\frac{10}{3}}^{\frac{10}{3}}+\|\thet^{n,m}\|_{\frac{10}{3}}^{\frac{10}{3}}\dt \le C\int_0^T  \|\thet^{n,m}\|_{2}^{\frac{4}{3}} \|\thet^{n,m}\|_{1,2}^{2}+ \|\vv^{n,m}\|_{2}^{\frac{4}{3}} \|\vv^{n,m}\|_{1,2}^{2} \dt  \le C(n,k).
\end{equation}


\subsubsection{Estimates independent of \texorpdfstring{$m$}{m} for time derivatives:}
In order to deduce the compactness of the velocity and the temperature, we also estimate the norms of their time derivative.
More specifically, our goal in this section is to prove that
\begin{equation}\label{eq:finaldtnm}
\int_0^T \|\partial_t \vv^{n,m}\|_2^2 + \|\partial_t \thet^{n,m}\|_{W^{-1,2}(\Omega)}^2 \dt \le C(n,k).
\end{equation}
We start with the velocity field. Since $\{\ww_i\}_{i=1}^{\infty}$ is orthonormal  in $L^2(\Omega)^d$, we have that
\begin{equation}\label{def:vwithc}
\int_0^T\|\partial_t \vv^{n,m}\|^2_{2}\dt =\int_0^T |\dot{\mathbf{c}}^{n,m}(s)|^2 \ds, \qquad \text{and}\qquad \| \vv^{n,m}(t)\|^2_{2}=|\mathbf{c}^{n,m}(t)|^2.
\end{equation}
%
%
%
Therefore, multiplying the $i$-equation in \eqref{eq:vnm} by $\dot{c}_i^{n,m}(t),$ summing over $i=1,\ldots,n$ and integrating it over time, we obtain (after using the estimate \eqref{eq:finalnm}) that
\begin{equation*}
\int_0^T\|\partial_t \vv^{n,m}\|^2_{2}\dt=\int_0^T \int_{\Omega}\T_k(\thet_{\ast}^{n,m})\mathbf{f} \cdot  \partial_t \vv^{n,m} + \left(\vv^{n,m} \otimes \vv^{n,m}\,g_{k}(|\vv^{n,m}|^2)-\S^{n,m}\right): \nabla\partial_t \vv^{n,m}\dx \dt.
\end{equation*}
Now, we are going to study each term separately. Using the fact that $\{\ww_i\}_{i=1}^{\infty}$ forms a basis of $W_{\mathbf{n},\diver}^{3,2}$, the fact that $\mathbf{W}^n$ is finite dimensional, the~H\"{o}lder inequality and the $\delta$-Young inequality and the a~priori bound \eqref{eq:finalnm}, we have
\begin{align*}
\left|\int_0^T \int_{\Omega}\T_k(\thet_{\ast}^{n,m})\mathbf{f} \cdot \partial_t \vv^{n,m} \dx \ds\right|&\leq \delta \int_0^T |\dot{\mathbf{c}}^{n,m}(s)|^2 \ds + C_\delta(n,k),\\
\left|\int_0^T  (\vv^{n,m} \otimes \vv^{n,m}\,g_{k}(|\vv^{n,m}|^2),\nabla \partial_t \vv^{n,m})\ds \right|&\leq  \delta \int_0^T |\dot{\mathbf{c}}^{n,m}(s)|^2 \ds + C_\delta(n,k).
\end{align*}
Proceeding similarly and using \eqref{nu2}$_2$ we get
\[
\left|\int_0^T  (\S^{n,m},\nabla\partial_t \vv^{n,m})\ds \right|\leq  \delta \int_0^T |\dot{\mathbf{c}}^{n,m}(s)|^2 \ds + C_\delta(n,k)\int_0^T |\mathbf{c}^{n,m}(s)|^{2(p-1)}\ds.
\]
Combining it all, taking $\delta$ small enough and applying \eqref{def:vwithc} and \eqref{eq:finalnm}, we obtain
%
\begin{equation}\label{ptvnm}
\int_0^T |\dot{\mathbf{c}}^{n,m}(s)|^2 \ds\leq C(n,k),
\end{equation}
which gives the first part of \eqref{eq:finaldtnm}.

Next, we focus on the estimates for $\partial_t \thet^{n,m}$. Using the orthogonality of the basis of $W^m$ and the regularity of $\partial_t \thet^{n,m}$, we have that for all $t\in (0,T)$
\begin{equation}\label{juhu}
\begin{aligned}
\|\partial_t \thet^{n,m}(t)\|_{W^{-1,2}(\Omega)}&=\sup_{\substack{\varphi\in W^{1,2}(\Omega)\\ \|\varphi\|_{1,2}\le 1}} \langle \partial_t \thet^{n,m}(t), \varphi \rangle_{W^{1,2}(\Omega)} = \sup_{\substack{\varphi\in W^{1,2}(\Omega)\\ \|\varphi\|_{1,2}\le 1}}\int_{\Omega} \partial_t \thet^{n,m}(t,x)  \varphi(x) \dx \\
&=  \sup_{\substack{\varphi\in W^{1,2}(\Omega)\\ \|\varphi\|_{1,2}\le 1}}\int_{\Omega} \partial_t \thet^{n,m}(t,x) P^m( \varphi)(x) \dx.
\end{aligned}
\end{equation}
%
%
Now, using \eqref{eq:tnm} we get (we omit writing $(t,x)$)
\begin{multline*}
\int_{\Omega}\partial_t \thet^{n,m} P^m(\varphi) \dx=\int_{\Omega} \left(\S^{n,m}:\D(\vv^{n,m})-\T_k(\thet_\ast^{n,m}) (\vv^{n,m}\cdot\mathbf{e}_d)\right)P^m(\varphi))\dx \\+ \int_{\Omega} \left(\T_k(\thet^{n,m})\vv^{n,m}+\vc{q}^{n,m}\right)\cdot \nabla P^m (\varphi)\dx,
\end{multline*}
and proceeding as before, we obtain
\begin{equation}\label{eq:auxdotthetnm}
\int_{\Omega} \partial_t \thet^{n,m}(t,x) P^m(\varphi)(x)\dx \leq C(n,k) \left(1+\|\vv^{n,m}(t)\|_2^{p}+\|\nabla \thet^{n,m}(t)\|_2 \right)\|P^m(\varphi)\|_{W^{1,2}(\Omega)}.
\end{equation}
Since, the properties of the basis $W^m$ gives that $\|P^m(\varphi)\|_{1,2}\le \|\varphi\|_{1,2}$, we can  use \eqref{eq:auxdotthetnm} in \eqref{juhu}, apply the second power and then integrate over $t\in (0,T)$ to get
\begin{equation*}
\begin{aligned}
\int_0^T \|\partial_t \thet^{n,m}(t)\|^2_{W^{-1,2}(\Omega)}\dt  \leq C(n,k) \int_0^T \left(1+\|\vv^{n,m}(t)\|_2^{2p}+\|\nabla \thet^{n,m}(t)\|^2_2 \right)\dt
\end{aligned}
\end{equation*}
and consequently, using \eqref{eq:finalnm}, we deduce that
%
%
\begin{equation}\label{eq:auxinttimethetnm}
\int_0^T\|\partial_t \thet^{n,m}\|^2_{W^{-1,2}(\Omega)}\dt \leq C(n,k).
\end{equation}

\subsection{Limit \texorpdfstring{$m\to\infty$}{m} for fixed \texorpdfstring{$n\in \mathbb{N}$}{n}}
In this part, we let $m\to \infty$ but keep $n\in \mathbb{N}$ fixed. Our goal is to identify limits in the equations \eqref{eq:vnm}--\eqref{eq:tnm} as well as in the constitutive relations \eqref{abbr}.

\subsubsection{Weak and strong limits based on a~priori estimates}
Having a~priori uniform estimates \eqref{eq:finalnm}--\eqref{eq:finaldtnm}, we can let $m\to\infty$ and find subsequences $\{c^{n,m}, \thet^{n,m}\}_{m=1}^{\infty}$, that we do not relabel, such that
\begin{subequations}\label{limitm}
\begin{align}
\label{m1}
\mathbf{c}^{n,m} \rightharpoonup^{\ast} \mathbf{c}^n \qquad &\text{weakly}^{\ast} \, \text{in} \, L^{\infty}(0,T), \\
\label{m2}
\dot{\mathbf{c}}^{n,m} \rightharpoonup \dot{\mathbf{c}}^n \qquad &\text{weakly}\phantom{\ast} \, \text{in} \, L^{2}(0,T),\\
\label{m3}
\thet^{n,m} \rightharpoonup^{\ast} \thet^n \qquad &\text{weakly}^{\ast} \, \text{in} \, L^{\infty}(0,T;L^2(\Omega)),\\
\label{m4}
\thet^{n,m} \rightharpoonup \thet^n \qquad &\text{weakly}\phantom{\ast} \, \text{in} \, L^{2}(0,T;W^{1,2}(\Omega)),\\
\label{m5}
\partial_t\thet^{n,m} \rightharpoonup \partial_t\thet^n \qquad &\text{weakly}\phantom{\ast} \, \text{in} \, L^{2}(0,T;W^{-1,2}(\Omega)),\\
\label{m6}
\thet^{n,m} \rightharpoonup \thet^n \qquad &\text{weakly}\phantom{\ast} \, \text{in} \, L^{\frac{10}{3}}(0,T;L^{\frac{10}{3}}(\Omega)).
\end{align}
\end{subequations}
Moreover, using the Aubin--Lions compactness lemma on the sequence $\{\thet^{n,m}\}_{m=1}^{\infty}$, i.e. using \eqref{m4}--\eqref{m6},  and the compact embedding $W^{1,2}(0,T)\hookrightarrow \hookrightarrow \mathcal{C}(0,T)$ on the sequence $\{\mathbf{c}^{n,m}\}_{m=1}^{\infty}$, i.e. using~\eqref{m2}, we have for a subsequence that we do not relabel
\begin{subequations}\label{eq:strongnm}
\begin{align}
\thet^{n,m}\to \thet^n \qquad &\text{strongly in } L^q(0,T;L^q(\Omega)) \text{ for all } q\in[1,10/3),\label{eq:thetnimitm}\\
\mathbf{c}^{n,m} \to \mathbf{c}^{n}  \qquad &\text{strongly in } \mathcal{C}([0,T]).\label{eq:cnmC0}
\end{align}
\end{subequations}
Moreover, it is a simple consequence of our choice of basis and \eqref{eq:cnmC0} that
\begin{equation}\label{eq:vnmtovn}
\vv^{n,m}\rightarrow \vv^n \qquad \text{strongly in } L^{\infty}(0,T;W^{1,\infty}(\Omega)^d),
\end{equation}
and consequently
\begin{equation}\label{eq:DvnmtoDvn}
\D(\vv^{n,m})\rightarrow \D(\vv^n) \qquad \text{strongly in } L^{\infty}(0,T;L^{\infty}(\Omega)^{d\times d}).
\end{equation}
In addition, using \eqref{nu2}, \eqref{eq:thetnimitm} and \eqref{eq:DvnmtoDvn} we can use the Lebesgue dominated convergence theorem to get after denoting $\S^{n,m}:=\S^{\ast}(\thet^{n,m},\D(\vv^{n,m}))$
\begin{equation}\label{eq:Snmlimit}
\S^{n,m}\rightarrow \S^{\ast}(\thet^{n},\D(\vv^{n})):= \S^n \qquad \text{strongly in } L^{q}(0,T;L^{q}(\Omega)^{d\times d})\text{ for all } q\in[1,\infty).
\end{equation}
Similarly, recalling  that $\vc{q}(f):=-\kappa(f)\nabla f$,  combining \eqref{m4} and \eqref{eq:thetnimitm} we get for $\vc{q}^{n,m}:=\vc{q}(\thet^{n,m})$ that
\begin{equation}\label{eq:qnmlimit}
\vc{q}^{n,m} \rightharpoonup \vc{q}^{n}:=\vc{q}(\thet^{n}) \qquad   \text{weakly in } L^2(0,T;L^2(\Omega)^3).
\end{equation}

\subsubsection{Limit in the equations for \texorpdfstring{$\vv^{n,m}$}{v} and \texorpdfstring{$\thet^{n,m}$}{t}}
The convergence results established in \eqref{limitm}--\eqref{eq:qnmlimit} are sufficient to let $m\to \infty$ in~\eqref{eq:vnm} and~\eqref{eq:tnm}. Indeed, we take an arbitrary $\varphi\in \mathcal{C}^{\infty}_0(0,T)$ and multiply the $i$-th equation in~\eqref{eq:vnm} and the  $j$-th equation in~\eqref{eq:tnm} by $\varphi$  and then integrate over time $t \in (0, T)$. Then, using the convergence results \eqref{limitm}--\eqref{eq:qnmlimit} it is easy to pass to the limit in all terms to get the following systems
\begin{equation*}
\int_0^T\left(\int_{\Omega}\partial_t\vv^{n}\cdot \ww_i+\left(\S^{n}-\vv^{n} \otimes \vv^{n} \, g_{k}(|\vv^{n}|^2)\right):\nabla \ww_i-\T_k(\thet_{\ast}^{n})\mathbf{f}\cdot  \ww_i\dx \right)\varphi\dt=0,
\end{equation*}
for all $i=1,\ldots,n$ and
\begin{equation*}
\int_0^T \left(\langle \partial_t\thet^{n}, w_j \rangle -\int_{\Omega}\left(\T_k(\thet^{n})\vv^{n}+\vc{q}^{n}\right) \cdot \nabla w_j+\S^{n}:\D(\vv^{n})w_j-\T_k(\thet_{\ast}^{n}) \vv^{n}\cdot\mathbf{f}w_j \dx \right)\varphi(t)\dt=0,
\end{equation*}
for all $j=1,\ldots,\infty.$ Because $\varphi\in \mathcal{C}^{\infty}_0(0, T)$ can be chosen arbitrarily we can conclude that
\begin{equation}\label{eq:vn}
\int_{\Omega}\partial_t\vv^{n} \cdot\ww_i+\left(\S^{n}-\vv^{n} \otimes \vv^{n} \, g_{k}(|\vv^{n}|^2)\right) :\nabla \ww_i-\T_k(\thet_{\ast}^{n})\mathbf{f} \cdot \ww_i\dx=0,
\end{equation}
for all $i=1,\ldots,n$ and all times $t\in(0,T).$ From the same reason and from the fact that $\{w_j\}_{j=1}^{\infty}$ forms a basis of $W^{1,2}(\Omega)$ we conclude that
\begin{equation}\label{eq:thetn}
\langle \partial_t\thet^{n},\psi\rangle-\int_{\Omega}\left(\T_k(\thet^{n})\vv^{n}+\vc{q}^{n}\right) \cdot \nabla \psi +\S^{n}:\D(\vv^{n})\psi- \T_k(\thet_{\ast}^{n}) \vv^{n}\cdot\mathbf{f}\psi\dx=0,
\end{equation}
is valid for all $\psi\in W^{1,2}(\Omega)$ and for all $t\in(0,T).$

\subsubsection{Attainment of the initial condition for \texorpdfstring{$(\vv^n,\thet^n)$}{}}
We start with the initial condition for the velocity. Since $\vv_0^{n,m}:=\mathbf{P}^{n}(\vv_0)$ we have that $\vv_0^{n,m}\equiv \vv_0^n$ is independent of $m$--parameter. Equivalently, we have $\mathbf{c}^{n,m}_0=\mathbf{c}^n_0,$ for all  $m\in\N$.
Due to \eqref{eq:cnmC0} we have $\mathbf{c}^{n,m}(t) \rightarrow \mathbf{c}^n(t)$ strongly in  $\mathcal{C}([0,T])$ and consequently we get $\mathbf{c}^n(0)=\mathbf{c}^n_0.$
Now, from the definition of $\vv^n(t,x)$ and $\vv^n_0(x)$ it is clear that
\[
\vv^n(0,x)=\vv^n_0(x)
\]
for all $x\in \Omega$.  It remains to show that $\thet^n(0,x)=\thet^n_0(x).$  First, note that a~priori estimates (following from \eqref{m4} and \eqref{m5}) together with the standard parabolic embedding imply that
\[
\thet^{n}\in\left\lbrace z\in L^2(0,T;W^{1,2}(\Omega)), \partial_t z \in L^2(0,T;W^{-1,2}(\Omega)) \right\rbrace   \hookrightarrow \mathcal{C}(0,T;L^2(\Omega)).
\]
Thus, it makes a good sense to define an initial condition. To prove our goal we integrate the equation~\eqref{eq:tnm} over time~$(0,t)$ to get
\begin{align*}
& \int_0^{t}\int_{\Omega}\left(\T_k(\thet^{n,m})\vv^{n,m}+\vc{q}^{n,m}\right) \cdot \nabla w_j+\left(\S^{n,m}:\D(\vv^{n,m})-\T_k(\thet_{\ast}^{n,m}) \vv^{n,m}\cdot\mathbf{f}\right)w_j\dx  \ds \\
&\qquad = \int_{\Omega}\thet^{n,m}(t)w_j\dx-\int_{\Omega}\thet^{n,m}_0 w_j\dx.
\end{align*}
Here, we have used the fact that $\thet^{n,m}_0=\thet^{n,m}(0)$.
Now, using the previous convergence results \eqref{limitm}--\eqref{eq:qnmlimit} and the convergence of the initial condition \eqref{sub2}, we can  let $m\to \infty$ and to obtain for almost all $t\in (0,T)$ that
\[
\int_0^{t}\int_{\Omega}\left(\T_k(\thet^{n})\vv^{n}+\vc{q}^{n}\right) \cdot \nabla w_j+\left(\S^{n}:\D(\vv^{n})-\T_k(\thet_{\ast}^{n}) \vv^{n}\cdot\mathbf{f}\right)w_j \dx \ds = \int_{\Omega}\thet^{n}(t) w_j\dx -\int_{\Omega}\thet^{n}_0 w_j\dx.
\]
Since $\thet^n\in \mathcal{C}(0,T;L^2(\Omega))$, the above identity can be extended for all $t\in (0,T)$. Consequently, letting $t\to 0_+$ in the above identity, we observe that
$$
\int_{\Omega} \thet^{n}(t) w_j \dx \xrightarrow[]{t\to 0_+} \int_{\Omega} \thet^{n}_0 w_j\dx.
$$
But as $\thet^n\in \mathcal{C}(0,T;L^2(\Omega))$ and weak limit as time tends to zero is $\thet^n_0$, we have that
\[
\lim_{t\to 0_+} \|\thet^n(t)-\thet^n_0\|_2=0.
\]

\subsection{Estimates independent of  \texorpdfstring{$n$}{n}}

In this part, we derive estimates that are $n$-independent and that help us to pass to the limit $n\to \infty$. Some of the estimates are also independent of $k$-approximation but if there is dependence on $k$, it will be clearly denoted.

\subsubsection{Nonnegativity for \texorpdfstring{$\thet^n$}{}} First, we show that the temperature $\thet^n$ is nonnegative, i.e.
\begin{equation}\label{minthetl}
\thet^n(t,x)\geq  0 \quad  \text{for a.a. }(t,x)\in (0,T)\times \Omega.
\end{equation}
We consider $\psi(t,x):=\chi_{[0,s]}(t)\min\{0,\thet^n(t,x)\}\leq 0$ as a test function in \eqref{eq:thetn}. Integrating it over time $t\in(0,T)$ we obtain
\[
\int_0^T\langle\partial_t\thet^{n},\psi\rangle_{W^{1,2}(\Omega)} \dt =\int_0^T\int_{\Omega}\left(\T_k(\thet^{n})\vv^{n}+\vc{q}^{n}\right)\cdot \nabla \psi+\S^{n}:\D(\vv^{n})\psi+\T_k(\thet_{\ast}^{n})\vv^{n}\cdot\mathbf{f}\psi \dx \dt.
\]
Next, we show that the right hand side is non-positive. Indeed, using the definition of $\vc{q}^n$, see \eqref{eq:qnmlimit}, and of $\psi$ we have that
$$
\vc{q}^n \cdot \nabla \psi = -\kappa(\thet^n)|\nabla \thet^n|^2 \chi_{[0,s]}\chi_{\{\thet^n\le 0\}}\le 0
$$
almost everywhere in $(0,T)\times \Omega$. Similarly, using \eqref{nu} we can compute
$$
0\le (\S^{n}-\S^*(\thet^n,\mathbf{0})):(\D(\vv^{n})-\mathbf{0}) = \S^{n}:\D(\vv^{n})
$$
and consequently, since $\psi$ is non-positive, we have that
$$
\S^{n}:\D(\vv^{n})\psi \le 0
$$
almost everywhere in $(0,T)\times \Omega$. Further, since $\thet^n_*=\max\{0,\thet^n\}$, it directly follows from the definition of $\psi$ that
$$
\T_k(\thet_{\ast}^{n})\vv^{n}\cdot\mathbf{f}\psi   \equiv 0.
$$
Finally, to estimate the convective term, we introduce the primitive function
\[
\mathfrak{T}_{k}(z):=\int_0^z \T_k(\tilde{z})\mbox{d}\tilde{z},
\]
and we get after  integration by parts (compare with \eqref{zero2}), that
\[
\int_{\Omega} \T_k(\thet^{n})\vv^{n}\cdot \nabla \psi\dx =\int_{\Omega}\vv^n \cdot \nabla \mathfrak{T}_{k}(\psi)\dx=-\int_{\Omega}\diver \vv^n   \mathfrak{T}_{k}(\psi)\dx + \int_{\partial \Omega} \mathfrak{T}_{k}(\varphi)\, \vv^{n}  \cdot\mathbf{n} \,{\rm d}S=0,
\]
where we used the fact that $\diver \vv^n=0$ in $\Omega$  and $\vv^{n}  \cdot\mathbf{n}=0$ on $\partial \Omega$.
%
%
Hence, we arrive at
\[
0\geq \int_0^T \langle \partial_t\thet^{n},\psi\rangle_{W^{1,2}(\Omega)}\dt =\int_0^{s} \langle \partial_t\psi,\psi\rangle_{W^{1,2}(\Omega)}\dt=\frac{1}{2}\left(\|\psi(s)\|_2^2 - \|\psi(0)\|_2^2 \right).
\]
Finally, the fact that $\psi(x,0)= 0$ a.a. $x\in\Omega$, implies that $\|\psi(s)\|_2=0$ for all $s\in(0,T).$ Then one can easily obtain the desired conclusion \eqref{minthetl}. Consequently, we can now replace $\thet_{\ast}^{n}$ by $\thet^n$ everywhere.

\subsubsection{Renormalization of the temperature equation}
For our result, the very special choice of the test functions in the temperature equation plays an essential role. Therefore, we state already at this point the renormalized version of the temperature equation, which then helps us to get the a~priori estimates and even more,  will be important for proving the entropy (in)equality.  For future use, it is convenient to record a renormalized version of the approximate
equation \eqref{eq:thetn}. Using \eqref{m4} we have that $\thet^n \in L^2(0,T;W^{1,2}(\Omega))$ and moreover we know $\theta^n\ge 0$. Therefore, if we consider $f\in \mathcal{C}^2([0,\infty))$ fulfilling $\|f'\|_{W^{1,\infty}(0,\infty)}<\infty$, we deduce that for arbitrary $\phi \in W^{1,2}(\Omega)\cap L^{\infty}(\Omega)$ the function $\psi:= f'(\thet^n) \phi \in W^{1,2}(\Omega)$ for almost all $t\in (0,T)$. Consequently, such $\varphi$ can be used in \eqref{eq:thetn} and we deduce
\begin{equation}
\begin{split}\label{eq:renorm}
\langle\partial_t f(\thet^{n}),\phi\rangle_{W^{1,2}(\Omega)}-\int_{\Omega}(\vv^n f(\T_k(\thet^n))+ f'(\thet^n)\vc{q}^n)\cdot \nabla\phi - \T_k(\thet^{n}) (\vv^{n}\cdot\mathbf{f})f'(\thet^n)\phi\dx\\
=\int_{\Omega} f''(\thet^n)\vc{q}^n \cdot \nabla\thet^n \phi +\S^{n}:\D(\vv^{n})f'(\thet^n)\phi\dx,
\end{split}
\end{equation}
which is valid for all $\phi\in W^{1,2}(\Omega)\cap L^{\infty}(\Omega)$, any $f\in \mathcal{C}(0,\infty)$ fulfilling $f'\in W^{1,\infty}(0,\infty)$ and for almost all $t\in(0,T).$ Please notice here that for the first term, we used the regularization approach as follows (the computation is done for almost all $t$ and $\thet^n_{\varepsilon}$ denotes the classical regularization with respect to the $t$-variable)
$$
\langle \partial_t \thet^n, \psi \rangle = \lim_{\varepsilon \to 0_+} \langle \partial_t \thet^n_{\varepsilon}, f'(\thet^n_{\varepsilon})\phi \rangle = \lim_{\varepsilon \to 0_+} \int_{\Omega} \partial_t \thet^n_{\varepsilon} f'(\thet^n_{\varepsilon})\phi \dx = \lim_{\varepsilon \to 0_+} \int_{\Omega} \partial_t f(\thet^n_{\varepsilon}) \phi \dx=\langle \partial_t f(\thet^n), \phi \rangle.
$$

\subsubsection{Energy and entropy estimates independent of \texorpdfstring{$n$}{n} and \texorpdfstring{$k$}{k}}

First, we derive the estimates based on the kinetic and internal energy. It is noticeable that they are independent of $n$ and also of $k$. We set $\psi\equiv 1\in W^{1,2}(\Omega)$ in \eqref{eq:thetn} and deduce the identity
\begin{equation}\label{intern}
 \frac{\mathrm{d}}{\dt} \int_\Omega \thet^{n}\dx -\int_\Omega \S^{n}:\D(\vv^{n}) \dx +\int_\Omega  \T_k(\thet^{n}) (\vv^{n}\cdot\mathbf{f})\dx=0,
\end{equation}
which is valid for almost all $t\in (0,T)$. Then, we multiply the $i$-the equation in \eqref{eq:vn} by $c^n_i$, sum over $i=1,\ldots,n$ and we obtain
\begin{equation}\label{eq:auxvn}
\frac{\mathrm{d}}{\dt} \int_\Omega  \frac{|\vv^{n}|^2}{2} \dx +\int_\Omega \S^{n}:\D(\vv^{n}) \dx  -\int_\Omega  \T_k(\thet^{n}) (\vv^{n}\cdot\mathbf{f})\dx=0,
\end{equation}
for a.a. $t\in(0,T).$ Here, we have used the symmetry of $\S^n$, the fact that $\diver \vv^n=0$ and the integration by parts (see the similar computation for the convective term in~\eqref{Zero}). 
Summing the above identities and using the nonnegativity of the temperature $\thet^n$, we get after integration with respect to time the energy equality
\[
\|\thet^n(t)\|_{1} + \tfrac{1}{2}\|\vv^{n}(t)\|_{2}^2 = \|\thet^n_0\|_{1}+\tfrac{1}{2} \|\vv_0^{n}\|_{2}^2 \le C,
\]
where the second inequality follows from the assumptions on $\thet_0$ and $\vv_0$ and from the properties of the projection $\mathbf{P}^{n}$.
Therefore, we have
\begin{equation}\label{L1thetn}
\|\thet^n\|_{L^{\infty}(0,T;L^1(\Omega))}\le C,
\end{equation}
and
\begin{equation}\label{L2vn}
\|\vv^n\|_{L^{\infty}(0,T;L_{\mathbf{n},\diver}^2)}\le C.
\end{equation}

Next, we show the uniform bound on the entropy. We fix $0<\epsilon\ll 1$, and consider $f(s)=\ln(\epsilon +s)$ and $\phi=1$ in \eqref{eq:renorm}. Note that due to the nonnegativity of the temperature $\thet^n$, such choice of $f$ is admissible. Using the following inequalities
\[
f''(\thet^n)\vc{q}^{n}\cdot \nabla \thet^n = \kappa(\thet^n)\frac{|\nabla\thet^n|^2}{(\epsilon+\thet^n)^2}\ge 0,
\qquad
\S^{n}:\D(\vv^{n})f'(\thet^n)=\frac{\S^{n}:\D(\vv^{n})}{\epsilon+\thet^n}\geq 0,
\]
and moving all terms with positive sign to one side, we observe that for almost all $t\in (0,T)$ we have
\begin{equation*}
-\frac{\mathrm{d}}{\dt}\int_{\Omega}\ln (\epsilon+\thet^n)\dx  + \int_{\Omega}\frac{\S^{n}:\D(\vv^{n})}{\epsilon+\thet^n}+ \frac{\kappa(\thet^n)|\nabla\thet^n|^2}{(\epsilon+\thet^n)^2}\dx \leq \int_{\Omega} \frac{\T_k(\thet^{n})}{\epsilon+\thet^n} (\vv^{n}\cdot\mathbf{f})\dx.
\end{equation*}
Thus, using the fact that $\mathbf{f}$ is bounded, see \eqref{DATA1}, we deduce after using the H\"{o}lder inequality and after integration over time that
\begin{align*}
&\|\ln (\epsilon+\thet^n(t))\|_1+\int_0^t\int_{\Omega}\frac{\S^{n}:\D(\vv^{n})}{\epsilon+\thet^n}+ \frac{\kappa(\thet^n)|\nabla\thet^n|^2}{(\epsilon+\thet^n)^2}\dx \ds \\
&\quad \leq 2\|\max\{0,\ln (\epsilon+\thet^n(t))\}\|_1 + \|\ln (\epsilon+\thet^n_0)\|_1 +C\int_0^t\|\vv^{n}(t)\|_2 \ds.
\end{align*}
Consequently, using the assumptions on $\thet_0^n$, letting $\epsilon \to 0_+$, using the Fatou lemma, the simple algebraic inequality $\max\{0,\ln (\epsilon+\thet^n(t))\}\le C(1+\thet^n(t))$ and the uniform bounds \eqref{L1thetn} and \eqref{L2vn}, we deduce
\begin{equation}\label{eq:logthetn}
\begin{split}
&\sup_{t\in(0,T)} \|\ln \thet^n(t)\|_1+\underline{\kappa}\int_0^T\|\nabla \ln \thet^n\|_2^2\dt\\
&\le \sup_{t\in(0,T)} \|\ln \thet^n(t)\|_1+\int_0^T\int_{\Omega}\frac{\S^{n}:\D(\vv^{n})}{\thet^n}+ \frac{\kappa(\thet^n)|\nabla\thet^n|^2}{(\thet^n)^2}\dx \dt \\
&\quad \le C\left(1+\|\ln \thet_0^n\|_1 +  \sup_{t\in (0,T)} (\|\thet^n(t)\|_1 + \|\vv^n(t)\|^2_2)\right)\le C,
\end{split}
\end{equation}
where $C$ is a uniform constant depending only on data.

\subsubsection{Gradient estimates independent of \texorpdfstring{$n$}{n} possibly depending on \texorpdfstring{$k$}{k}}
Here, we derive the estimates for the velocity and the temperature gradients that are independent of $n$ but may depend on the truncation $k$.
We start with the velocity filed. Integrating in time the equation \eqref{eq:auxvn} and using the estimate~\eqref{L2vn}, we obtain
$$
\int_0^T \int_{\Omega} \S^n : \D(\vv^n)\dx \dt \le C(k).
$$
Thus, the assumption~\eqref{nu2}  and the Korn inequality~\eqref{eq:Korna0} and a~priori estimate~\eqref{L2vn} imply that
\begin{equation}\label{Lpvn}
\int_{0}^T \|\S^n\|_{p'}^{p'} + \|\vv^n\|_{1,p}^p \dt \le C(k).
\end{equation}

Next, we focus on the estimate for the temperature. The starting point is to show that for all $\sigma\in (0,1)$ there holds
\begin{equation}\label{eq:thetnC}
\int_0^T\int_\Omega \frac{|\nabla \thet^n|^2}{(1+\thet^n)^{1+\sigma}}\dx \dt \leq C(\sigma,k).
\end{equation}
%
%
To show \eqref{eq:thetnC},  we define the auxiliary function
$$
f(s):=\frac{(1+s)^{1-\sigma}}{1-\sigma}, \quad f'(s)=(1+s)^{-\sigma}, \quad f''(s)=-\sigma (1+s)^{-1-\sigma},
$$
where $0<\sigma<1$. Then we use \eqref{eq:renorm} with this $f$ and also set $\phi:=1$.
%
%
Doing so, we deduce
\begin{equation*}
\begin{split}
\frac{\mathrm{d}}{\dt} \int_{\Omega} \frac{(1+\thet^n)^{1-\sigma}}{1-\sigma}\dx +\int_{\Omega} \frac{\T_k(\thet^{n}) (\vv^{n}\cdot\mathbf{f})}{(1+\thet^n)^{\sigma}}\dx =\int_{\Omega} \frac{\sigma \kappa(\thet^n)|\nabla \thet^n|^2}{(1+\thet^n)^{1+\sigma}} +\frac{\S^{n}:\D(\vv^{n})}{(1+\thet^n)^{\sigma}}\dx.
\end{split}
\end{equation*}
Here, we used the definition of $\vc{q}^n$ in \eqref{eq:qnmlimit}. Noticing again that
$$
\S^{n}:\D(\vv^{n})\ge 0
$$
almost everywhere in $(0,T)\times \Omega$, we can integrate the above identity over time, use  the H\"{o}lder inequality and the already obtained bounds \eqref{L1thetn} and  \eqref{L2vn} to deduce  \eqref{eq:thetnC}.

Next, we focus on estimates for $\thet^n$ following from \eqref{eq:thetnC}. First, due to nonnegativity of $\thet^n$, it follows form \eqref{eq:thetnC} that for all $\sigma \in (0,1)$
$$
\int_{0}^T \int_{\Omega} \left|\nabla (1+\thet^n)^{\frac{1-\sigma}{2}}\right|^2 \dx \dt \le C(k,\sigma).
$$
Moreover, using also the uniform bound \eqref{L1thetn}, we obtain (recall that $\sigma \in (0,1)$)
$$
\int_{0}^T \int_{\Omega} \left| (1+\thet^n)^{\frac{1-\sigma}{2}}\right|^2 \dx \dt \le C \int_{0}^T \int_{\Omega} (1+\thet^n) \dx \dt \le C.
$$
Consequently, combining the above inequalities and also \eqref{L1thetn}, we have that
$$
\sup_{t\in (0,T)}\|(1+\thet^n(t))^{\frac{1-\sigma}{2}}\|_{2}^2 + \int_{0}^T \|(1+\thet^n)^{\frac{1-\sigma}{2}}\|^2_{1,2}\dt \le C(k,\sigma).
$$
Thus, using the interpolation inequality \eqref{interp} (we use its three dimensional variant only here), we deduce the classical estimate for the temperature of the form
\begin{equation*}
\begin{split}
\int_0^T \int_{\Omega}|(\thet^n)^{1-\sigma}|^{\frac{5}{3}} \dx \dt &\le
\int_0^T \|(1+\thet^n)^{\frac{1-\sigma}{2}}\|_{\frac{10}{3}}^{\frac{10}{3}}\dt \\
&\le \|(1+\thet^n)^{\frac{1-\sigma}{2}}\|_{L^{\infty}(0,T; L^2)}^{\frac{4}{3}} \int_0^T \|(1+\thet^n)^{\frac{1-\sigma}{2}}\|_{1,2}^2\dt \le C(k,\sigma),
\end{split}
\end{equation*}
which is valid for all $\sigma \in (0,1)$. Therefore, it directly follows from the above inequality that
\begin{equation}\label{thnLp}
\begin{split}
\int_0^T \int_{\Omega}|\thet^n|^{q} \dx \dt &\le C(k,q) \quad \textrm{ for all } q\in \left[1,\frac53\right).
\end{split}
\end{equation}
%
%
%

To derive an estimate on $\nabla \thet^n$, we consider $1\leq r < 5/4.$ Combining the  H\"{o}lder inequality and the previous estimate \eqref{thnLp}, we obtain
\begin{equation*}
\begin{aligned}
\int_0^T \int_\Omega |\nabla \thet^n|^r \dx \dt & \leq \left(\int_0^T \int_\Omega \frac{|\nabla \thet^n|^2}{(1+\thet^n)^{(1+\sigma)}}\dx \dt \right)^{\tfrac{r}{2}}\left(\int_0^T\int_{\Omega} (1+\thet^n)^{(1+\sigma)\tfrac{r}{2-r}} \dx \dt \right)^{\tfrac{2-r}{2}}\\
&\leq C(k,\sigma,r),
\end{aligned}
\end{equation*}
provided that we can choose $\sigma \in (0,1)$ such that
$$
\frac{r(1+\sigma)}{2-r} < \frac53.
$$
Since $r\in [1,\frac54)$, we can always find $\sigma \in (0,1)$ so that the above inequality holds and therefore
\begin{equation}
\begin{aligned}\label{eq:gradthetn}
\int_0^T \| \thet^n\|_{1,r}^r  \dt & \leq C(k,r) \quad \textrm{ for all } r\in \left[1,\frac54\right).
\end{aligned}
\end{equation}

\subsubsection{Estimates for time derivatives independent of \texorpdfstring{$n$}{n}}
In order to deduce the compactness of the velocity and the temperature, we also need to get a bound on the norms of their time derivatives.
More specifically, our goal in this section will be to prove the uniform bounds in the following spaces
\begin{align}\label{eq:unindt}
\partial_t \vv^{n}\in L^{p'}(0,T;(W_{\mathbf{n},\diver}^{3,2})^{\ast}) \qquad \text{and} \qquad \partial_t \thet^{n}\in L^1(0,T;(W^{1,z}(\Omega))^{\ast}), \qquad \textrm{for all }z>5.
\end{align}

We start with the velocity field. Assume that $\bphi\in W_{\mathbf{n},\diver}^{3,2}$ is arbitrary and fulfills $\|\bphi\|_{W_{\mathbf{n},\diver}^{3,2}}\le 1$. We also recall the orthogonality of the basis $\{\vc{w}_i\}_{i=1}^{\infty}$ as well as the continuity of the projection $\mathbf{P}^n$ in the space $W_{\mathbf{n},\diver}^{3,2}$. Then we have due to the regularity of $\partial_t \vv^n$ that
\begin{align*}
\langle \partial_t \vv^{n}(t) , \bphi\rangle_{W_{\mathbf{n},\diver}^{3,2}}&= \int_{\Omega} \partial_t \vv^{n}\cdot \bphi\dx =\int_{\Omega}\partial_t \vv^{n} \cdot \mathbf{P}^n(\bphi)\dx.
\end{align*}
Now, using \eqref{eq:vn} we get
\[
\int_{\Omega}\partial_t\vv^{n}\cdot \mathbf{P}^n(\bphi)\dx = \int_{\Omega} \left(\vv^{n} \otimes \vv^{n} \, g_{k}(|\vv^{n}|^2)-\S^{n} \right) : \nabla \mathbf{P}^n(\bphi)+\T_k(\thet^{n})\mathbf{f}\cdot \mathbf{P}^n(\bphi)\dx,
\]
and proceeding as before and using the growth assumption \eqref{nu2},  we obtain that for all $t\in (0,T)$
\begin{align*}
\langle \partial_t\vv^{n}(t),\mathbf{P}^n(\bphi)\rangle_{W_{\mathbf{n},\diver}^{3,2}}&\leq C(k) \left(1+\|\vv^n(t)\|_{1,p}^{p-1}\right)\|\mathbf{P}^n(\bphi)\|_{W_{\mathbf{n},\diver}^{3,2}}\\
&\le C(k) \left(1+\|\vv^n(t)\|_{1,p}^{p-1}\right)\|\bphi\|_{W_{\mathbf{n},\diver}^{3,2}}\leq C(k) \left(1+\|\vv^n(t)\|_{1,p}^{p-1}\right).
\end{align*}
Consequently, we have
\begin{equation*}
\|\partial_t\vv^{n}(t)\|_{(W_{\mathbf{n},\diver}^{3,2})^{\ast}}\leq C(k) \left(1+\|\vv^n(t)\|_{1,p}^{p-1}\right)
\end{equation*}
and raising this inequality to the power $p'$ and integrating the result over $(0,T)$, using also the already obtained bound \eqref{Lpvn}, we deduce
\begin{equation}\label{eq:timevn}
\int_0^T \|\partial_t \vv^{n}\|_{(W_{\mathbf{n},\diver}^{3,2})^{\ast}}^{p'}\dt \le C(k)\int_0^T 1+\|\vv^n(t)\|_{1,p}^{p} \dt \leq C(k).
\end{equation}

%
%
%

Finally, we focus on the estimate for time derivative of $\thet^n$. Recall that
\begin{equation}\label{eq:dotthetn}
\langle \partial_t\thet^{n},\psi\rangle-\int_{\Omega}(\T_k(\thet^{n})\vv^{n}+\vc{q}^{n})\cdot \nabla \psi\dx  +\S^{n}:\D(\vv^{n})\psi-\T_k(\thet^{n}) (\vv^{n}\cdot\mathbf{f})\psi\dx=0,
\end{equation}
is valid for all $\psi\in W^{1,2}(\Omega)$ and for a.a. $t\in(0,T).$ Let us consider $\psi \in W^{1,z}(\Omega)$ with $z>5$ fulfilling $\|\psi\|_{1,z} \le 1$. Then using the fact that $W^{1,z}(\Omega)\hookrightarrow L^{\infty}(\Omega)$, we get by using the H\"{o}lder inequality that for almost all $t\in (0,T)$ the following inequality holds
\begin{equation*}
\begin{split}
\langle \partial_t\thet^{n}(t),\psi\rangle&=\int_{\Omega}(\T_k(\thet^{n}(t))\vv^{n}(t)+\vc{q}^{n}(t))\cdot \nabla \psi  +\S^{n}(t):\D(\vv^{n}(t))\psi-\T_k(\thet^{n}(t)) (\vv^{n}(t)\cdot\mathbf{f}(t))\psi\dx\\
&\le C(k)(\|\vv^n(t)\|_{z'}+\|\nabla \thet^n(t)\|_{z'})\|\psi\|_{1,z}+ C(k)(\|\S^{n}(t):\D(\vv^{n}(t)\|_1+\|\vv^{n}(t)\|_1)\|\psi\|_{\infty}.
\end{split}
\end{equation*}
Hence, using \eqref{nu2}, and the fact that $z>2$, we deduce that for almost all $t\in (0,T)$ there holds
\begin{equation*}
\begin{split}
\|\partial_t \thet^n(t)\|_{(W^{1,z}(\Omega))^*}&=\sup_{\varphi\in W^{1,z}(\Omega), \|\varphi\|_{1,z}\le 1} \langle \partial_t\thet^{n}(t),\psi\rangle\\
&\le  C(k)\left(1+\|\vv^n(t)\|_{2}+\|\nabla \thet^n(t)\|_{z'}+\|\vv^{n}(t)\|_{1,p}^p\right).
\end{split}
\end{equation*}
Since $z>5$ we have that $z'<\frac54$ and therefore integration of the above inequality over $(0,T)$ and applying the uniform bounds \eqref{L2vn}, \eqref{Lpvn} and  \eqref{eq:gradthetn} leads to
\begin{equation}\label{eq:thetantime}
\begin{split}
\int_0^T \|\partial_t \thet^n\|_{(W^{1,z}(\Omega))^*}\dt \le C(k,z) \quad \textrm{ for all }z>5.
\end{split}
\end{equation}

\subsection{Identification of limits as \texorpdfstring{$n\to\infty$}{n}}
In this final part we let $n\to \infty$ and complete the proof of Lemma~\ref{mainlemma}.
\subsubsection{Convergence results based on a~priori estimates}
Having a~priori uniform estimates \eqref{L2vn}, \eqref{Lpvn}, \eqref{eq:gradthetn}, \eqref{thnLp} and \eqref{eq:timevn}, using the definition of $\vc{q}^{n}$ and the assumption \eqref{k},  and using the reflexivity of underlying spaces (or separability of their pre-dual spaces), we can let $n\to\infty$ and find subsequences $\{\vv^{n}, \thet^{n}\}_{n=1}^{\infty}$ (that are not relabeled) such that
\begin{subequations}\label{limitl}
\begin{align}
\label{l1}
\vv^{n} &\rightharpoonup^{\ast} \vv &&\text{weakly$^{\ast}$  in }  L^{\infty}(0,T;L^2(\Omega)^d), \\
\label{l2}
\vv^{n} &\rightharpoonup \vv &&\text{weakly$^{\phantom{\ast}}$ in }  L^{p}(0,T;W_{\mathbf{n},\diver}^{1,p}(\Omega)), \\
\label{l3}
\partial_t\vv^n &\rightharpoonup \partial_t\vv &&\text{weakly$^{\phantom{\ast}}$ in } L^{p'}(0,T;(W_{\mathbf{n},\diver}^{3,2})^{\ast}),\\
\label{l4}
\thet^{n} &\rightharpoonup \thet &&\text{weakly$^{\phantom{\ast}}$ in }  L^{q}(0,T;L^{q}(\Omega)), \text{ for all } q\in [1,5/3),\\
\label{l5}
\thet^{n} &\rightharpoonup \thet &&\text{weakly$^{\phantom{\ast}}$ in } L^{r}(0,T;W^{1,r}(\Omega)), \text{ for all } r\in [1,5/4), \\
\label{limitSbar}
\S^{n} &\rightharpoonup \S &&\text{weakly$^{\phantom{\ast}}$ in } L^{p'}(0,T;L^{p'}(\Omega)^{3\times 3}),\\
\label{limitqbar}
\vc{q}^{n} &\rightharpoonup \vc{q} &&\text{weakly$^{\phantom{\ast}}$ in }  L^{r}(0,T;L^{r}(\Omega)^d), \text{ for all }r\in [1,5/4).
\end{align}
\end{subequations}
Moreover, using the generalized version of the Aubin--Lions compactness lemma, the estimate \eqref{eq:thetantime} and the convergence results \eqref{l2} and \eqref{l5}, we obtain
\begin{subequations}\label{kok}
\begin{align}\label{eq:strongthetn}
\thet^{n}&\to \thet &&\text{strongly in } L^1(0,T;L^1(\Omega)),\\
\label{eq:stongvvn}
\vv^{n}&\to \vv &&\text{strongly in } L^p(0,T;L_{\mathbf{n},\diver}^p).
\end{align}
\end{subequations}
Then, going back to the uniform bounds \eqref{eq:logthetn} and \eqref{eq:thetnC}, we also have for a proper subsequence
\begin{subequations}\label{limitlb}
\begin{align}
\label{l3b}
\thet^{n} &\to \thet &&\text{strongly in } L^{q}(0,T;L^{q}(\Omega)) \textrm{ for all }q\in [1,5/3),\\
\label{l4b}
(1+\thet^{n})^{\frac{1-\sigma}{2}} &\rightharpoonup^{\ast}  (1+\thet)^{\frac{1-\sigma}{2}} &&\text{weakly$^{\ast}$ in } L^{\infty}(0,T;L^{2}(\Omega)) \textrm{ for all }\sigma\in [0,1), \\
\label{l5b}
(1+\thet^{n})^{\frac{1-\sigma}{2}} &\rightharpoonup  (1+\thet)^{\frac{1-\sigma}{2}} &&\text{weakly$^{\phantom{\ast}}$ in } L^{2}(0,T;W^{1,2}(\Omega)) \textrm{ for all }\sigma\in (0,1), \\
\label{l6b}
\ln\thet^{n} &\rightharpoonup \ln \thet &&\text{weakly$^{\phantom{\ast}}$ in } L^{2}(0,T;W^{1,2}(\Omega)).
\end{align}
\end{subequations}
In addition, using the Fatou lemma, the convergence result \eqref{eq:strongthetn} and the estimate \eqref{eq:logthetn}, we have
\begin{equation}
\ln \thet \in L^{\infty}(0,T; L^1(\Omega)).\label{thetlogg}
\end{equation}

\subsubsection{Limit for the velocity equation}
Having the convergence results \eqref{limitl} and \eqref{kok}, we can easily pass to the limit $n\to \infty$ in \eqref{eq:vn} and to conclude for all $\ww\in W_{\mathbf{n},\diver}^{3,2}$ and a.a. $t\in(0,T)$ that
\begin{equation}\label{eq:finalvdiv}
\langle \partial_t\vv,\ww\rangle +\int_{\Omega}(\S-\vv \otimes \vv \, g_{k}(|\vv|^2)):\nabla \ww-\T_k(\thet)\mathbf{f}\cdot\ww\dx=0.
\end{equation}
In addition, as the space $W_{\mathbf{n},\diver}^{3,2}(\Omega)$ is dense in $W_{\mathbf{n},\diver}^{1,p}$ for all $p\geq 1$ we have that \eqref{eq:finalvdiv} holds true for all $\ww\in W_{\mathbf{n},\diver}^{1,p}$ and almost all $t\in(0,T)$ and also that
$$
\partial_t\vv \in L^{p'}(0,T; (W_{\mathbf{n},\diver}^{1,p})^{\ast}).
$$
Consequently, the standard parabolic interpolation gives that
$$
\vv\in \mathcal{C}([0,T]; L^2(\Omega)^3).
$$
To complete the part of the proof of Lemma~\ref{mainlemma}, which is related to the velocity field, we need to identify $\S$ and also the initial condition for $\vv(0)$.
\subsubsection{Attainment of the initial condition for \texorpdfstring{$\vv$}{v}}
The arguments concerning the attainment of the initial conditions $\vv_0$ are standard.
For sake of completeness, we included all details below. Since $\vv\in \mathcal{C}(0,T;L^2(\Omega)^3),$ we get that
\[
\vv(t)\rightarrow \vv(0) \quad \text{strongly in } L_{\mathbf{n},\diver}^2 \text{ as } t\to 0_+.
\]
In what follows, we show that
\[
\vv(t)\rightharpoonup \vv_0 \quad \text{weakly in } L_{\mathbf{n},\diver}^2 \text{ as } t\to 0_+,
\]
and these convergence results together (due to the uniqueness of weak convergence) identify the limit \eqref{limt0approx}$_1$, that we want to prove.

Let $0<\varepsilon\ll 1$ and $t\in (0,T-\varepsilon)$ be arbitrary. We consider the auxiliary function $\eta$ defined in \eqref{def:eta}, multiply \eqref{eq:vn} by this $\eta$, and integrate the result over $(0, T)$ to obtain for every $i=1,\ldots,n$
\[
\int_0^T \int_{\Omega}\partial_t\vv^{n}\cdot \ww_i \eta+ (\S^{n}-\vv^{n} \otimes \vv^{n} \, g_{k}(|\vv^{n}|^2)):\nabla \ww_i\eta -\T_k(\thet^{n})\mathbf{f}\cdot \ww_i\eta \, \dx \ds =0.
\]
Next, we integrate by parts in the first term, use that $\eta(T) = 0$, and the equality $\vv^n(0)=\mathbf{P}^{n}(\vv_0)$ to get
\[
\int_0^T \int_{\Omega}-\vv^{n}\cdot \ww_i \eta'+ (\S^{n}-\vv^{n} \otimes \vv^{n} \, g_{k}(|\vv^{n}|^2)):\nabla \ww_i\eta -\T_k(\thet^{n})\mathbf{f}\cdot \ww_i\eta \, \dx \ds=\int_{\Omega}\mathbf{P}^{n}(\vv_0)\cdot \ww_i\eta(0)\dx.
\]
This identity  is ready for the use of the convergence results \eqref{limitl}--\eqref{kok} as well as
the convergence of the projection $\mathbf{P}^{n}(\vv_0)$ to obtain for any $i\in\mathbb{N}$ that
\[
\int_0^T \int_{\Omega}-\vv\cdot \ww_i \eta'+ (\S-\vv \otimes \vv \, g_{k}(|\vv|^2)):\nabla \ww_i\eta -\T_k(\thet)\mathbf{f}\cdot \ww_i\eta \, \dx \ds=\int_{\Omega}\vv_0\cdot \ww_i\eta(0)\dx.
\]
Using the definition of $\eta$, i.e. using that $\eta(\tau) = 1$ for $\tau\in[0, t)$ and $\eta(\tau) = 0$ for $\tau\in(t + \varepsilon, T]$, and $\eta'(\tau )=-\frac{1}{\varepsilon}$ for $\tau \in (t, t + \varepsilon)$, we obtain
\[
\frac{1}{\varepsilon}\int_t^{t+\varepsilon} \int_{\Omega}\vv\cdot \ww_i \dx\ds + \int_0^{t+\varepsilon}\int_{\Omega}(\S-\vv \otimes \vv \, g_{k}(|\vv|^2)):\nabla \ww_i\eta -\T_k(\thet)\mathbf{f}\cdot \ww_i\eta \, \dx \ds=\int_{\Omega}\vv_0\cdot \ww_i\dx.
\]
Next, since $\vv\in \mathcal{C}(0,T;L_{\mathbf{n},\diver}^2)$, we can easily let $\varepsilon \to 0_+$ to get
\[
\int_{\Omega}\vv(t)\cdot \ww_i \dx + \int_0^{t}\int_{\Omega}(\S-\vv \otimes \vv \, g_{k}(|\vv|^2)):\nabla \ww_i\eta -\T_k(\thet)\mathbf{f}\cdot \ww_i\eta \, \dx \ds=\int_{\Omega}\vv_0\cdot \ww_i\dx
\]
for all $t\in (0,T)$. Therefore, we see that
\[
\lim_{t\to 0_+}\int_{\Omega} \vv(t)\cdot \ww_i\dx =\int_{\Omega}\vv_0\cdot \ww_i\dx.
\]
This holds for every $i \in\mathbb{N}$, and since $\{\ww_i\}_{i=1}^{\infty}$  is a basis of the space $W_{\mathbf{n},\diver}^{3,2}$, this is nothing more than
\[
\vv(t)\rightharpoonup \vv_0 \quad \text{weakly in } (W_{\mathbf{n},\diver}^{3,2})^* \text{ as } t\to 0_+.
\]
Finally, since $W_{\mathbf{n},\diver}^{3,2}$ is dense in $L_{\mathbf{n},\diver}^2$, the weak convergence result that we expected is also true, and it identifies the strong limit of the initial condition in $L_{\mathbf{n},\diver}^2$ as required in  \eqref{limt0approx}$_1$.

\subsubsection{Identification of \texorpdfstring{$\S$}{S}}
Next, we show that
\begin{equation}\label{minty}
\S = \S^*(\thet, \D(\vv)) \qquad \text{a.e. in } (0,T)\times \Omega.
\end{equation}
First, we notice that we can extend the solution to the interval $(0,T+1)$, e.g., by extending $\mathbf{f}$ by zero outside of $(0,T)$.
Multiplying the $i$-th equation in  \eqref{eq:vn} by $c_i^n$, summing the result over $i=1,\ldots, n$ and integrate over $(0,T+s)$, where $s\in (0,1)$, we deduce the following identity (the convective term vanishes)
\begin{multline*}
\int_0^{T+s} \int_\Omega \S^n :\D(\vv^n)\dx\dt= -\int_0^{T+s}\int_{\Omega} \partial_t \vv^n\cdot \vv^n+\T_k(\thet^n)\mathbf{f}\cdot \vv^n\dx\dt\\
=-\frac{\|\vv^n(T+s)\|_2^2}{2} + \frac{\|\mathbf{P}^n(\vv_0)\|_2^2}{2} - \int_0^{T+s}\int_{\Omega}\T_k(\thet^n)\mathbf{f}\cdot \vv^n\dx\dt.
\end{multline*}
Using the fact that $\S^n :\D(\vv^n)\ge 0$, we can integrate the above identity over $s\in (0,\varepsilon)$ to get
\begin{multline*}
\int_0^{T} \int_\Omega \S^n :\D(\vv^n)\dx\dt\le -\frac{1}{\varepsilon}\int_T^{T+\varepsilon}\frac{\|\vv^n(t)\|_2^2}{2} \dt + \frac{\|\mathbf{P}^n(\vv_0)\|_2^2}{2} - \frac{1}{\varepsilon}\int_0^{\varepsilon}\int_0^{T+s}\int_{\Omega}\T_k(\thet^n)\mathbf{f}\cdot \vv^n\dx\dt \ds.
\end{multline*}
Then, we may directly use the convergence results \eqref{limitl} and \eqref{kok}, together with the Fatou lemma to get
\begin{multline*}
\limsup_{n\to \infty}\int_0^{T} \int_\Omega \S^n :\D(\vv^n)\dx\dt\le -\frac{1}{\varepsilon}\int_T^{T+\varepsilon}\frac{\|\vv(t)\|_2^2}{2} \dt + \frac{\|\vv_0\|_2^2}{2} - \frac{1}{\varepsilon}\int_0^{\varepsilon}\int_0^{T+s}\int_{\Omega}\T_k(\thet)\mathbf{f}\cdot \vv\dx\dt \ds.
\end{multline*}
Finally, letting $\varepsilon \to 0_+$, we get (using that $\vv\in \mathcal{C}([0,T]; L^2(\Omega))$)
\begin{equation}\label{SS1}
\limsup_{n\to \infty}\int_0^{T} \int_\Omega \S^n :\D(\vv^n)\dx\dt\le -\frac{\|\vv(T)\|_2^2}{2} \dt + \frac{\|\vv_0\|_2^2}{2} - \int_0^{T}\int_{\Omega}\T_k(\thet)\mathbf{f}\cdot \vv\dx\dt.
\end{equation}
Then, setting $\ww:=\vv$ in \eqref{eq:finalvdiv}, integrating over $(0,T)$ and using the fact that $\vv(0)=\vv_0$, we get the identity
\begin{equation}\label{SS2}
\int_0^{T} \int_\Omega \S :\D(\vv)\dx\dt= -\frac{\|\vv(T)\|_2^2}{2} \dt + \frac{\|\vv_0\|_2^2}{2} - \int_0^{T}\int_{\Omega}\T_k(\thet)\mathbf{f}\cdot \vv\dx\dt.
\end{equation}
Comparing \eqref{SS1} and \eqref{SS2}, we directly derive
\begin{equation}
\limsup_{n\to \infty}\int_0^T \int_\Omega \S^n :\D(\vv^n)\dx\dt \leq \int_0^{T} \int_\Omega \S :\D(\vv)\dx\dt.
\end{equation}
Consequently, using this inequality, the monotonicity  and the growth assumption \eqref{nu1}--\eqref{nu2}, the strong convergence result \eqref{eq:strongthetn}, the weak convergence results \eqref{l2}, \eqref{limitSbar} and the Lebesgue dominated convergence theorem, we deduce that for all $\overline{\D}\in L^p(Q,\mathbb{R}^{d\times d})$ there holds
\begin{equation}\label{mintytoL1}
\begin{split}
0&\leq \limsup_{n\to\infty}\int_0^T \int_\Omega (\S^n-\S^{\ast}(\thet^n,\overline{\D})):(\D(\vv^n)-\overline{\D})\dx\dt\\
&\leq \int_0^T \int_\Omega (\S-\S^{\ast}(\thet,\overline{\D})):(\D(\vv)-\overline{\D})\dx\dt.
\end{split}
\end{equation}
The choice $\overline{\D}:=\D(\vv)\pm\lambda \widetilde{\D}$ with $\lambda>0$ then leads (after division by $\lambda$ and letting $\lambda \to 0_+$) to
\begin{equation*}
\begin{split}
0=\int_0^T \int_\Omega (\S-\S^{\ast}(\thet,\D(\vv))):\widetilde{\D} \dx\dt
\end{split}
\end{equation*}
for all $\overline{\D}\in L^p(Q,\mathbb{R}^{d\times d})$. This directly implies \eqref{minty}.

In addition, we show that
\begin{equation}\label{SDvL1limit}
\S^n:\D(\vv^n) \rightharpoonup \S:\D(\vv) \qquad \text{weakly in } L^1(0,T;L^1(\Omega)^{3\times 3}).
\end{equation}
Indeed, we set $\overline{\D}\equiv \D(\vv)$ in \eqref{mintytoL1} and we get
\[
\limsup_{n\to\infty}\int_0^T \int_\Omega (\S^n-\S^{\ast}(\thet^n,\D(\vv))):(\D(\vv^n)-\D(\vv))\dx\dt=0.
\]
However, thanks to \eqref{nu1}, this implies
\begin{equation}\label{auxSDvL1limit}
(\S^n-\S^{\ast}(\thet^n,\D(\vv))):(\D(\vv^n)-\D(\vv))\to 0 \qquad \text{strongly in } L^1((0,T)\times \Omega).
\end{equation}
Since
\[
\S^{\ast}(\thet^n,\D(\vv)):(\D(\vv^n)-\D(\vv))\rightharpoonup 0 \qquad \text{weakly in } L^1((0,T)\times \Omega),
\]
which follows from \eqref{l2}, \eqref{eq:strongthetn} and \eqref{nu2}, we see that \eqref{auxSDvL1limit} directly implies \eqref{SDvL1limit}.

\subsubsection{Limit in the temperature gradient and the renormalized temperature equation}
First, we let~$n\to \infty$ in \eqref{eq:thetn}. Using the weak convergence results \eqref{l2}--\eqref{l5}, the strong convergence results \eqref{kok}, the essential convergence stated in \eqref{SDvL1limit}, the convergence of the initial condition \eqref{subb1} and using also the integration by parts with respect to time variable, we deduce that for all $\psi \in \mathcal{C}_0^1(-\infty, T; \mathcal{C}^1(\overline{\Omega}))$ there holds
\begin{equation}\label{eq:thetA}
-\int_0^T \int_{\Omega} \thet  \partial_t\psi+\left(\T_k(\thet)\vv+\vc{q}\right) \cdot \nabla \psi +\S:\D(\vv)\psi+ \T_k(\thet) \vv\cdot\mathbf{f}\psi\dx \dt=\int_{\Omega} \thet_0 \psi(0)\dx.
\end{equation}
In addition, having the weak compactness stated in \eqref{auxSDvL1limit}, we can follow step by step the procedure developed in \cite{Pr97}, see also \cite{AbBuKa19}, to pass to the limit in \eqref{eq:renorm} and to obtain \eqref{temp}.
%
Finally, to obtain \eqref{entp}, we set $f'(s):=(s-m)_+/s$ and $\phi\equiv 1$ in \eqref{eq:renorm} to get after integration over $(0,T)$ that
\begin{equation*}
\begin{split}
\int_{Q}\chi_{\{\thet^n>m\}}\frac{m\kappa(\thet^n)|\nabla \thet^n|^2}{(\thet^n)^2}\dx \dt \le \int_{Q} \frac{\S^n:\D\vv^n (\thet^n-m)_+}{\thet^n}+ \frac{\T_k(\thet^{n}) (\vv^{n}\cdot\mathbf{f})(\thet^n-m)_+}{\thet^n} \dx \dt.
\end{split}
\end{equation*}
Using the weak lower semicontinuity and also all the above established convergence results, we may let $n\to \infty$ to get
\begin{equation}
\begin{split}\label{eq:renormnn}
\int_{Q}\chi_{\{\thet>m\}}\frac{m\kappa(\thet)|\nabla \thet|^2}{(\thet)^2}\dx \dt \le \int_{Q} \frac{\S:\D\vv (\thet-m)_+}{\thet}+ \frac{\T_k(\thet) (\vv\cdot\mathbf{f})(\thet-m)_+}{\thet} \dx \dt.
\end{split}
\end{equation}
Thus, we see that taking $\limsup$ as $m\to \infty$, one conclude~\eqref{entp}.

\subsubsection{Attainment of the initial condition for \texorpdfstring{$\thet$}{t}}
This is a very classical part and we refer the reader e.g. to \cite{BuMaRa09} or \cite{BoGa96,BoDaGaOr97}.

\subsubsection{On the pressure} In this final subsection, we sketch the proof the existence of the pressure $\pi\in  L^{p'}(Q)$. We refer for details to \cite{BuMR} or \cite{BuMaRa09}. By the Helmholtz decomposition we observe that
\[
W_{\mathbf{n}}^{1,p}:=W_{\mathbf{n},\diver}^{1,p}\oplus \{\nabla \varphi: \varphi\in W^{2,p}(\Omega), \nabla\varphi\cdot\mathbf{n}=0 \text{ on }\partial\Omega \}.
\]
Having $(\vv,\thet)$ we introduce $\pi$ as the solution of the following problem
\begin{equation}\label{eq:shortpi}
\pi:=(-\Delta_N)^{-1}\diver\left(\diver\S+\diver(\vv\otimes\vv \, g_k(|\vv|^2))-\T_k(\thet)\mathbf{f}\right),
\end{equation}
where $(-\Delta_N)$ denotes the Laplace operator together with homogeneous Neumann boundary conditions.
We consider
\[
\int_{\Omega}\pi(t,x) \dx=0 \quad \text{for a.a. } t\in (0,T).
\]
The $L^p$-regularity theory for the Neumann problem \eqref{eq:shortpi} implies, see \cite[Proposition 2.5.2.3]{G}, that
\[
\pi\in L^{p'}(0,T;L^{p'}(\Omega)).
\]
Note that the weak formulation of \eqref{eq:shortpi} is the following identity
\begin{align*}
\int_{\Omega} \pi \Delta \varphi \dx =\int_{\Omega} \S: \nabla^2 \varphi -(\vv \otimes \vv \, g_{k}(|\vv|^2)) :\nabla^2 \varphi -(\T_k(\thet)\mathbf{f} \cdot \nabla \varphi)\dx,
\end{align*}
for all $\varphi\in W^{2,p}(\Omega)$ with $\nabla\varphi\cdot\mathbf{n}=0$ on $\partial\Omega$ and a.a. $t\in(0,T).$ Having such pressure in hands, it is then easy to show that \eqref{eq:finalvdiv} holds as
\begin{equation}\label{eq:finalvpi}
\langle \partial_t\vv,\ww\rangle +\int_{\Omega}(\S-\vv \otimes \vv \, g_{k}(|\vv|^2)) : \nabla \ww - \pi \diver \ww-\T_k(\thet)\mathbf{f}\cdot  \ww\dx=0,
\end{equation}
for all $\ww\in W_{\mathbf{n}}^{1,p}(\Omega)$ and a.a. $t\in(0,T).$
%
%
%
%
%
%
%


\providecommand{\bysame}{\leavevmode\hbox to3em{\hrulefill}\thinspace}
\providecommand{\MR}{\relax\ifhmode\unskip\space\fi MR }
\providecommand{\MRhref}[2]{%
  \href{http://www.ams.org/mathscinet-getitem?mr=#1}{#2}
}
\providecommand{\href}[2]{#2}


\end{document}